\newtheorem{theorem}{Theorem}[section]
\newtheorem{definition}{Definition}[section]
\newtheorem{corollary}{Corollary}[section]
\newtheorem{lemma}{Lemma}[section]
\newtheorem{scholium}{Scholium}[section]
\newenvironment{proof}[1][Proof]{\noindent\textbf{#1.} }{\
\rule{0.5em}{0.5em}}
\begin{document}

\pagestyle{fancy}
\fancyhead{} 
\fancyhead[EC]{\small\it Patrice P.
Ntumba, \ Adaeze Orioha}%
\fancyhead[EL,OR]{\thepage} \fancyhead[OC]{\small\it
Abstract Geometric Algebra. Orthogonal and Symplectic Geometries}%
\fancyfoot{} 
\renewcommand\headrulewidth{0.5pt}
\addtolength{\headheight}{2pt} 

\title{\Large{Abstract Geometric Algebra. Orthogonal and Symplectic Geometries}}
\author{Patrice P.
Ntumba\footnote{Is the corresponding author for the paper.}, \
Adaeze Orioha}

\date{}
\maketitle

\begin{abstract} Our main interest in this paper is chiefly concerned with the conditions
characterizing \textit{orthogonal and symplectic abstract
differential geometries}. A detailed account about the
sheaf-theoretic version of the \textit{symplectic Gram-Schmidt
theorem} and of the \textit{Witt's theorem} is also given.
\end{abstract}
{\it Key Words}: Orthosymmetric $\mathcal{A}$-bilinear forms,
sheaf of $\mathcal{A}$-radicals, convenient $\mathcal{A}$-modules.

\maketitle

\section*{Introduction}

Abstract Differential Geometry (acronym, ADG) offers a new approach
to classical Differential Geometry (on smooth manifolds). This new
approach differs from the classical way of understanding the
geometry of smooth manifolds, differential spaces $\grave{a}$ la
Mostow\cite{mostow}, $\grave{a}$ la Sikorski\cite{sikorski}, and the
likes, in the sense that, for instance, differential spaces in
general are governed by new classes of ``smooth" functions, whereas
in ADG the \textit{structural sheaf} of functions characterizing a
differential space (in the terminology of ADG, a
\textit{differential triad}), is replaced instead by an arbitrary
\textit{sheaf of algebras $\mathcal{A}$,} based on a topological
space $X$, whose role is \textit{just} to parametrize $\mathcal{A}$.
The same (sheaf of) algebras may in some cases contain a tremendous
amount of \textit{singularities}, while still retaining the
classical character of a \textit{differential} mechanism, yet
without any underlying (smooth) manifold: see e.g.
Mallios\cite{mallios}, Mallios\cite{mallios2}. This results to
significant potential applications, even to \textit{quantum gravity}
(ibid.). We may also point out that the \textit{main moral} of ADG
is the \textit{functorial mechanism} of (classical) calculus, cf.
Mallios~\cite{invariance}, viz. \textit{Physics is
$\mathcal{A}$-invariant regardless of what $\mathcal{A}$ is.}

Yet, a particular instance of the above that also interests us
here is the standard \textit{Symplectic Differential Geometry} (on
manifolds), where a special important issue is the so-called
\textit{orbifolds theory}; see e.g. Mallios~\cite[Vol. II, Chapt.
X; Section 3a]{mallios} concerning its relation with ADG, or da
Silva\cite{silva} for the classical case. The following
constitutes a sheaf-theoretic fundamental prelude with a view
towards potential applications of ADG, the whole set-up being in
effect a Lagrangian perspective. In particular, one of the goals
of this paper consists in trying to generalize primarily the
\textit{symplectic Gram-Schmidt theorem} and the \textit{Witt
theorem for isometric symplectic convenient
$\mathcal{A}$-modules}, see e.g. Crumeyrolle\cite{crumeyrolle}, as
well as some other results, necessary for the setting of the
aforesaid \textit{sheaf-theoretic} version, \textit{in terms of
$\mathcal{A}$-modules} (see below) of both \textit{orthogonal} and
\textit{symplectic geometries.} Most of the concepts of the latter
version are defined on the basis of the classical ones; see, for
instance, Artin\cite{artin}, Crumeyrolle\cite{crumeyrolle},
Lang\cite{lang}. Our main reference, throughout the present
account, is  Mallios\cite{mallios}, which may be useful for the
basics of ADG.

This is a continuation of work done by Mallios and Ntumba
\cite{malliosntumba1}, \cite{malliosntumba2}, and
\cite{malliosntumba3}.

\textbf{Convention}: Throughout the paper, $X$ will denote an
arbitrary topological space and the pair $(X, \mathcal{A})$ a
fixed $\mathbb{C}$-algebraized space, cf. Mallios[\cite{mallios},
p. 96]; all $\mathcal{A}$-modules are understood to be defined on
$X$.

For easy reference, we recall a few basic definitions.

Let $(X, \mathcal{A})$ be a \textit{$\mathbb{C}$-algebraized
space,} that is the pair $(X, \mathcal{A})$ consists of a
topological space $X$ and a (preferably \textit{unital} and
\textit{commutative}) \textit{sheaf of $\mathbb{C}$-algebras
$\mathcal{A}\equiv (\mathcal{A}, \tau, X)$.} A sheaf of
$\mathcal{A}$-modules (or an $\mathcal{A}$-module) on $X$, is a
sheaf $\mathcal{E}\equiv (\mathcal{E}, \pi, X)$, on $X$,  such
that the following hold:
\begin{itemize}
\item $\mathcal{E}$ is a sheaf of abelian groups. \item For every
point $x\in X$, the corresponding stalk $\mathcal{E}_x$ of
$\mathcal{E}$ is a (left) $\mathcal{A}_x$-module. \item The
\textit{exterior module multiplication in $\mathcal{E}$,} viz. the
map
\[\mathcal{A}\circ \mathcal{E}\longrightarrow \mathcal{E}: (a,
z)\longmapsto a\cdot z\in \mathcal{E}_x\subseteq
\mathcal{E},\]with $\tau(a)= \pi(z)=x\in X$, is
\textit{continuous.}\end{itemize}

On another hand, suppose given a presheaf of $\mathbb{C}$-algebras
$A\equiv (A(U), \tau^U_V)$ and a presheaf of abelian groups
$E\equiv (E(U), \rho^U_V)$, both on a topological space $X$ such
that \begin{itemize} \item $E(U)$ is a (left) $A(U)$-module, for
every open set $U$ in $X$. \item For any open sets $U, V$ in $X$,
with $V\subseteq U$, \[\rho^U_V(a\cdot s)= \tau^U_V(a)\cdot
\rho^U_V(s),\]for any $a\in A(U)$ and $s\in E(U)$. We call such a
presheaf $E$ a \textit{presheaf of $A(U)$-modules} on $X$, or
simply an \textit{$A$-presheaf} on $X$.
\end{itemize}

These two notions relate to one-another in the sense that the
sheafification of a presheaf of $A(U)$-modules on a topological
space $X$ is an $\mathcal{A}$-module. See
Mallios~\cite[(1.54)]{mallios}.

\section{Symplectic Gram-Schmidt theorem}

\begin{lemma}\label{lem2}
Let $[(\mathcal{E}, \mathcal{F}; \phi); \mathcal{A}]$ be a pairing
of $\mathcal{A}$-modules. Then, $\phi$ induces an
$\mathcal{A}$-morphism, viz. \[\phi^\mathcal{E}:
\mathcal{F}\longrightarrow \mathcal{E}^\ast:=
\mathcal{H}om_\mathcal{A}(\mathcal{E}, \mathcal{A}),\]see
Mallios~$\cite[p.133; (6.3) p.134; (6.8) p.135]{mallios}$, given
by
\[\phi^\mathcal{E}_U(t)(s):= \phi_V(s, \sigma^U_V(t))\equiv
\phi_V(s, t|_V),\]where $U$ is open in $X$, $t\in \mathcal{F}(U)$,
$s\in \mathcal{E}(V)$ and the $\sigma^U_V$ the restriction maps of
the presheaf of sections of $\mathcal{F}$. Likewise, $\phi$ gives
rise to a similar $\mathcal{A}$-morphism: \[\phi^\mathcal{F}:
\mathcal{E}\longrightarrow \mathcal{F}^\ast.\]
\end{lemma}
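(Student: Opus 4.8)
The plan is to construct $\phi^{\mathcal{E}}$ directly on the level of the presheaves of sections and then to invoke the standard identification of $\mathcal{A}$-morphisms of sheaves with morphisms of their associated presheaves of sections, cf. Mallios~\cite[(1.54)]{mallios}; since $\mathcal{H}om_{\mathcal{A}}(\mathcal{E},\mathcal{A})$ is already given by the complete presheaf $U\mapsto\mathrm{Hom}_{\mathcal{A}|_U}(\mathcal{E}|_U,\mathcal{A}|_U)$, no sheafification subtlety intervenes. Throughout I use that, by the definition of a pairing of $\mathcal{A}$-modules, $\phi$ is $\mathcal{A}$-bilinear, hence induces on sections $\mathcal{A}(U)$-bilinear maps $\phi_U:\mathcal{E}(U)\times\mathcal{F}(U)\to\mathcal{A}(U)$ satisfying $\phi_W(s|_W,t|_W)=\phi_U(s,t)|_W$ whenever $W\subseteq U$.

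First I would fix an open $U\subseteq X$ and a section $t\in\mathcal{F}(U)$, and for every open $V\subseteq U$ set $\Phi^t_V:\mathcal{E}(V)\to\mathcal{A}(V)$, $\Phi^t_V(s):=\phi_V(s,t|_V)$. Two verifications are needed: (i) each $\Phi^t_V$ is $\mathcal{A}(V)$-linear, which is immediate from the $\mathcal{A}(V)$-bilinearity of $\phi_V$ in its first slot; (ii) the family $(\Phi^t_V)_{V}$ commutes with restrictions, since for $W\subseteq V\subseteq U$ and $s\in\mathcal{E}(V)$,
\[\Phi^t_W(s|_W)=\phi_W\big(s|_W,(t|_V)|_W\big)=\phi_V(s,t|_V)|_W=\Phi^t_V(s)|_W,\]
using transitivity of restriction and the restriction-compatibility of $\phi$. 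Hence $(\Phi^t_V)_V$ is an $\mathcal{A}|_U$-morphism $\mathcal{E}|_U\to\mathcal{A}|_U$, that is, an element $\phi^{\mathcal{E}}_U(t)\in\mathcal{E}^{\ast}(U)$, and the displayed formula $\phi^{\mathcal{E}}_U(t)(s)=\phi_V(s,t|_V)$ holds by construction.

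Next I would check that $t\mapsto\phi^{\mathcal{E}}_U(t)$ is $\mathcal{A}(U)$-linear and natural in $U$. Additivity is clear from additivity of $\phi_V$ in its second slot, while for $a\in\mathcal{A}(U)$ and $s\in\mathcal{E}(V)$ one has
\[\phi^{\mathcal{E}}_U(a\cdot t)(s)=\phi_V\big(s,(a\cdot t)|_V\big)=\phi_V\big(s,a|_V\cdot t|_V\big)=a|_V\cdot\phi_V(s,t|_V)=\big(a\cdot\phi^{\mathcal{E}}_U(t)\big)(s),\]
the last equality being just the definition of the $\mathcal{A}(U)$-module structure on $\mathcal{E}^{\ast}(U)$. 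Naturality is the identity $\phi^{\mathcal{E}}_{U'}(t|_{U'})=\phi^{\mathcal{E}}_U(t)|_{U'}$ for $U'\subseteq U$, obtained by evaluating both sides on an arbitrary $s\in\mathcal{E}(V)$ with $V\subseteq U'$, where both reduce to $\phi_V(s,t|_V)$. Thus $(\phi^{\mathcal{E}}_U)_U$ is a morphism of the presheaf of sections of $\mathcal{F}$ into that of $\mathcal{E}^{\ast}$, hence determines the asserted $\mathcal{A}$-morphism $\phi^{\mathcal{E}}:\mathcal{F}\to\mathcal{E}^{\ast}$.

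For the ``likewise'' statement, the same recipe with the two variables interchanged — $\phi^{\mathcal{F}}_U(s)(t):=\phi_V(s|_V,t)$ for $s\in\mathcal{E}(U)$, $t\in\mathcal{F}(V)$, $V\subseteq U$ — produces $\phi^{\mathcal{F}}:\mathcal{E}\to\mathcal{F}^{\ast}$, the verifications being verbatim those above with the roles of $\mathcal{E}$ and $\mathcal{F}$ swapped. I expect the only point deserving genuine care, rather than pure bookkeeping, to be step (ii): one must make sure the locally defined linear maps genuinely glue into a \emph{sheaf} morphism, i.e. that they land in $\mathcal{H}om_{\mathcal{A}}(\mathcal{E},\mathcal{A})$ and not merely in some presheaf-level Hom — which is precisely the place where the compatibility of $\phi$ with restrictions is used, and the step I would write out in full.
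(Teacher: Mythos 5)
Your proposal is correct and follows essentially the same route as the paper's proof: both work with the complete presheaf of sections $U\mapsto\mathrm{Hom}_{\mathcal{A}|_U}(\mathcal{E}|_U,\mathcal{A}|_U)$ of $\mathcal{E}^{\ast}$ and reduce the claim to the compatibility $\kappa^U_V\circ\phi^{\mathcal{E}}_U=\phi^{\mathcal{E}}_V\circ\sigma^U_V$, verified by evaluating both sides on $s\in\mathcal{E}(W)$ and obtaining $\phi_W(s,t|_W)$. You merely write out in full the routine linearity and well-definedness checks that the paper delegates to references to Mallios.
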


\begin{proof}
Assume that $(\mathcal{E}^\ast(U), \kappa^U_V)$ is the presheaf of
sections of $\mathcal{E}^\ast.$ For $\phi^\mathcal{E}$ to be an
$\mathcal{A}$-morphism, we must have
\[\kappa^U_V\circ \phi^\mathcal{E}_U= \phi^\mathcal{E}_V\circ
\sigma^U_V,\] for any open subsets $U, V$ of $X$ such that
$V\subseteq U$. In fact, fix $U$ and $V$. For $t\in
\mathcal{F}(U)$ and $s\in \mathcal{E}(W)$, where $W\subseteq V$ is
an open subset of $X$, $\kappa^U_V(\phi^\mathcal{E}_U(t))(s)=
\phi_W(s, t|_W).$ On the other hand, $\phi_V^\mathcal{E}(t|_V)(s)=
\phi_W(s, t|_W).$ The preceding shows the correctness of our
assertion regarding the map $\phi^\mathcal{E}$; to this effect
still, see Mallios~\cite[(13.19) p.75 and (6.5) p. 27]{mallios}.
In a similar way, one shows that $\phi^\mathcal{F}$ is an
$\mathcal{A}$-morphism.
\end{proof}

Linked with Lemma \ref{lem2} is an important concept, which we now
introduce.

\begin{definition}
\emph{Let $[(\mathcal{E}, \mathcal{F}; \phi); \mathcal{A}]$ be a
pairing of $\mathcal{A}$-modules $\mathcal{E}$ and $\mathcal{F}$,
and $\phi^\mathcal{E}$ and $\phi^\mathcal{F}$ be the induced
$\mathcal{A}$-morphisms, according to Lemma \ref{lem2}. By the
\textbf{orthogonal} of $\mathcal{E}$ (resp. $\mathcal{F}$),
denoted $\mathcal{E}^\perp$ (resp. $\mathcal{F}^\perp$), we mean
the \textit{kernel of $\phi^\mathcal{E}$ $($resp.
$\phi^\mathcal{F})$,} (see Mallios~\cite[p.108]{mallios} for the
kernel of an $\mathcal{A}$-morphism). $\phi$ is said to be
\textbf{non-degenerate} if $\mathcal{E}^\perp=
\mathcal{F}^\perp=0$, and \textbf{degenerate} otherwise.}
\end{definition}

\begin{lemma}
Let $[(\mathcal{E}, \mathcal{F}; \phi); \mathcal{A}]$ be a pairing
of $\mathcal{A}$-modules. Then, $\mathcal{E}^\perp$ $($resp.
$\mathcal{F}^\perp)$ is a sub-$\mathcal{A}$-module of
$\mathcal{F}$ $($resp. $\mathcal{E})$.
\end{lemma}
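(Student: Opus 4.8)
The statement asserts that $\mathcal{E}^\perp := \ker\phi^\mathcal{E}$ is a sub-$\mathcal{A}$-module of $\mathcal{F}$ (and symmetrically for $\mathcal{F}^\perp$). The key observation is that Lemma \ref{lem2} already does the heavy lifting: it establishes that $\phi^\mathcal{E}:\mathcal{F}\to\mathcal{E}^\ast$ is an $\mathcal{A}$-morphism of $\mathcal{A}$-modules. So the entire statement reduces to the general fact that the kernel of an $\mathcal{A}$-morphism is a sub-$\mathcal{A}$-module of its source. The plan is therefore to invoke this general fact, with a pointer to Mallios~\cite[p.108]{mallios} where the kernel construction is carried out, and then to say just enough about why it applies here.

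First I would recall that $\mathcal{E}^\perp$ is defined stalk-wise: $(\mathcal{E}^\perp)_x = \ker(\phi^\mathcal{E}_x : \mathcal{F}_x \to (\mathcal{E}^\ast)_x)$, since the kernel sheaf of an $\mathcal{A}$-morphism has stalks equal to the kernels of the stalk maps. Because each $\phi^\mathcal{E}_x$ is an $\mathcal{A}_x$-module homomorphism (this is exactly what "$\phi^\mathcal{E}$ is an $\mathcal{A}$-morphism" gives at the stalk level), its kernel $\ker(\phi^\mathcal{E}_x)$ is an $\mathcal{A}_x$-submodule of $\mathcal{F}_x$: it is closed under addition, under negation, contains $0_x$, and is stable under the exterior multiplication $\mathcal{A}_x \times \mathcal{F}_x \to \mathcal{F}_x$ since $\phi^\mathcal{E}_x(a\cdot t) = a\cdot\phi^\mathcal{E}_x(t) = a\cdot 0 = 0$ whenever $\phi^\mathcal{E}_x(t)=0$. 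Thus $\mathcal{E}^\perp$ is set-theoretically a union of $\mathcal{A}_x$-submodules of the stalks $\mathcal{F}_x$.

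Next I would address the topological/sheaf-theoretic point, which is the only place any care is needed: one must check that $\mathcal{E}^\perp$, with the subspace topology inherited from $\mathcal{F}$, is itself a sheaf on $X$ — i.e., that the projection $\pi|_{\mathcal{E}^\perp}:\mathcal{E}^\perp\to X$ is a local homeomorphism — and that the induced exterior multiplication $\mathcal{A}\circ\mathcal{E}^\perp\to\mathcal{E}^\perp$ is continuous. Both follow from the standard fact (Mallios~\cite[p.108]{mallios}) that the kernel of an $\mathcal{A}$-morphism $f:\mathcal{F}\to\mathcal{G}$ is an $\mathcal{A}$-module: locally $\mathcal{E}^\perp$ is open in $\mathcal{F}$ because $\ker f$ can be written, on the relevant basic open sets, as the equalizer of $f$ and the zero section, both of which are continuous sections, so $\ker f$ is closed in $\mathcal{F}$ over that open set yet simultaneously a union of open pieces coming from the sheaf-sections of the presheaf kernel $U\mapsto\ker(\phi^\mathcal{E}_U)$; continuity of the restricted scalar action is then the restriction of the continuous action on $\mathcal{F}$. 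The same argument verbatim, applied to $\phi^\mathcal{F}:\mathcal{E}\to\mathcal{F}^\ast$, shows $\mathcal{F}^\perp$ is a sub-$\mathcal{A}$-module of $\mathcal{E}$.

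The main (and essentially only) obstacle is purely expository rather than mathematical: one must be sure that the kernel in the category of $\mathcal{A}$-modules agrees with the naive stalk-wise kernel and carries the right topology, so that "sub-$\mathcal{A}$-module" is meant in the correct sheaf-theoretic sense. Once the reference to Mallios~\cite[p.108]{mallios} is invoked for "the kernel of an $\mathcal{A}$-morphism is a sub-$\mathcal{A}$-module," the proof is immediate from Lemma \ref{lem2}; there are no nontrivial computations to grind through.
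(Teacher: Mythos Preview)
Your proposal is correct and follows exactly the same approach as the paper: the paper's proof consists of a single sentence invoking Mallios~\cite[(2.10) p.~108]{mallios} for the fact that the kernel of an $\mathcal{A}$-morphism is a sub-$\mathcal{A}$-module, which is precisely the reduction you identify via Lemma~\ref{lem2}. Your additional stalk-wise and topological commentary is sound but goes beyond what the paper records.
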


\begin{proof}
The proof follows Mallios~\cite[(2.10) p. 108]{mallios}.
\end{proof}

\begin{lemma}
If $[(\mathcal{E}, \mathcal{F}; \phi); \mathcal{A}]$ is a pairing of
free $\mathcal{A}$-modules, then for every open subset $U$ of $X$,
\[\begin{array}{ll} \mathcal{E}^\perp(U)= \mathcal{E}(U)^\perp, & \mathcal{F}^\perp(U)=
\mathcal{F}(U)^\perp,\end{array}\]where \[\mathcal{E}(U)^\perp:=
\{t\in \mathcal{F}(U): \phi_U(\mathcal{E}(U), t)=0\}\]and similarly
\[\mathcal{F}(U)^\perp:= \{t\in \mathcal{E}(U):
\phi_U(t, \mathcal{F}(U))=0\}.\]
\end{lemma}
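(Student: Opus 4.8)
The plan is to unwind the definition of $\mathcal{E}^\perp$ as the \emph{kernel sheaf} of the $\mathcal{A}$-morphism $\phi^\mathcal{E}\colon \mathcal{F}\to \mathcal{E}^\ast$ furnished by Lemma \ref{lem2}, and then to translate the vanishing condition $\phi^\mathcal{E}_U(t)=0$ into a statement about the bilinear maps $\phi_V$ on sections. Since the kernel presheaf of a morphism of sheaves is already a sheaf, one has $\mathcal{E}^\perp(U)=\ker\bigl(\phi^\mathcal{E}_U\bigr)=\{t\in\mathcal{F}(U):\phi^\mathcal{E}_U(t)=0\}$; and because an element of $\mathcal{E}^\ast(U)$ is precisely an $\mathcal{A}$-morphism $\mathcal{E}|_U\to\mathcal{A}|_U$, the equality $\phi^\mathcal{E}_U(t)=0$ amounts to $\phi^\mathcal{E}_U(t)(s)=\phi_V(s,t|_V)=0$ for every open $V\subseteq U$ and every $s\in\mathcal{E}(V)$.

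The inclusion $\mathcal{E}^\perp(U)\subseteq\mathcal{E}(U)^\perp$ is then immediate and uses no freeness: specializing the above display to $V=U$ gives $\phi_U(s,t)=0$ for all $s\in\mathcal{E}(U)$, i.e.\ $t\in\mathcal{E}(U)^\perp$. The substance of the lemma is the reverse inclusion, and this is where freeness of $\mathcal{E}$ enters. Fix a basis $(e_i)_{i\in I}$ of $\mathcal{E}$ by global sections, so that $(e_i|_V)_i$ is an $\mathcal{A}(V)$-basis of $\mathcal{E}(V)$ for every open $V$. Let $t\in\mathcal{E}(U)^\perp$, let $V\subseteq U$ be open and $s\in\mathcal{E}(V)$, and write $s=\sum_i a_i\,(e_i|_V)$ with $a_i\in\mathcal{A}(V)$ (locally finitely many nonzero). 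Using $\mathcal{A}$-bilinearity of $\phi_V$ together with the compatibility of $\phi$ with restrictions, $\phi_V(s',t|_V)=\phi_U(s',t)|_V$ for $s'\in\mathcal{E}(U)$, we get $\phi_V(s,t|_V)=\sum_i a_i\,\phi_V(e_i|_V,t|_V)=\sum_i a_i\,\bigl(\phi_U(e_i|_U,t)\bigr)|_V$. Since $e_i|_U\in\mathcal{E}(U)$ and $t\in\mathcal{E}(U)^\perp$, every $\phi_U(e_i|_U,t)$ vanishes, whence $\phi_V(s,t|_V)=0$. As $V$ and $s$ were arbitrary, $\phi^\mathcal{E}_U(t)=0$, i.e.\ $t\in\mathcal{E}^\perp(U)$. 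The identity $\mathcal{F}^\perp(U)=\mathcal{F}(U)^\perp$ follows by the symmetric argument, now invoking freeness of $\mathcal{F}$ and the morphism $\phi^\mathcal{F}$.

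The one delicate point I expect is precisely this reverse inclusion: a priori, knowing that $t$ is $\phi_U$-orthogonal to all of $\mathcal{E}(U)$ says nothing about its pairing with sections $s\in\mathcal{E}(V)$ that do not extend to $U$, and indeed the equality $\mathcal{E}^\perp(U)=\mathcal{E}(U)^\perp$ fails for general $\mathcal{A}$-modules. Freeness resolves this because it forces every local section to be an $\mathcal{A}(V)$-combination of restrictions of the \emph{global} basis sections, which do extend, so orthogonality propagates via restriction-compatibility. I would take care to record that compatibility explicitly (it is part of the data of a pairing, $\phi$ being induced by a morphism of presheaves of $\mathcal{A}$-modules), and, should $\mathcal{E}$ be permitted infinite rank, to note that the expansion $s=\sum_i a_i(e_i|_V)$ is locally finite, so that the computation above is carried out on a covering of $V$ and the conclusion glued.
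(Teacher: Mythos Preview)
Your proof is correct and follows essentially the same approach as the paper: the easy inclusion $\mathcal{E}^\perp(U)\subseteq\mathcal{E}(U)^\perp$ is declared immediate, and the reverse inclusion is obtained by choosing a basis of $\mathcal{E}$, using the restriction-compatibility $\phi_U(e_i,t)|_V=\phi_V(e_i|_V,t|_V)$, and the fact that the restricted basis still spans $\mathcal{E}(V)$. The only cosmetic differences are that the paper works with a finite canonical basis $\{e^U_i\}_{i=1}^n$ on $U$ rather than your global basis, and does not comment on the infinite-rank situation or unpack the kernel-sheaf description of $\mathcal{E}^\perp(U)$ as explicitly as you do.
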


\begin{proof}
That $\mathcal{E}^\perp(U)\subseteq \mathcal{E}(U)^\perp$ is clear.
Now, let $\mathcal{E}(U)^\perp$ and $\{e^U_i\}^n_{i=1}$ be a
canonical basis of $\mathcal{E}(U)$. Since $\phi_U(e^U_i, t)|_V=
\phi_V({e^U_i}|_V, t|_V)=0$ and $\{{e^U_i}|_V\}^n_{i=1}$ being a
canonical basis of $\mathcal{E}(V)$, we have $\phi_V(s, t|_V)=0,$
for any $s\in \mathcal{E}(V).$ Therefore,
$\mathcal{E}(U)^\perp\subseteq \mathcal{E}^\perp(U),$ and hence the
equality $\mathcal{E}^\perp(U)= \mathcal{E}(U)^\perp.$

The second equality is shown in a similar way.
\end{proof}

\begin{scholium}\label{scho0}
\emph{For the particular case where $\phi$ is an $\mathcal{A}$-bilinear
form on an $\mathcal{A}$-module $\mathcal{E}$, we denote by
$\mathcal{E}^\perp$ the left $\mathcal{A}$-orthogonal of
$\mathcal{E}$, whereas $\mathcal{E}^\top$ will be its right
$\mathcal{A}$-orthogonal. So, for any open subset $U$ of $X$, one
has \[\mathcal{E}^\perp(U)= \{t\in \mathcal{E}(U):\
\phi_V(\mathcal{E}(V), t|_V)=0, \ \mbox{for all open $V\subseteq
U$}\},\]and similarly \[\mathcal{E}^\top(U)= \{t\in
\mathcal{E}(U):\ \phi_V(t|_V, \mathcal{E}(V))=0, \ \mbox{for all
open $V\subseteq U$}\}.\]Thus, for the particular case where
$\mathcal{F}= \mathcal{E}$ in Definition 1.1, one gets
\[\begin{array}{lll} \mathcal{E}^\perp:= \ker
\phi^\mathcal{E}\subseteq \mathcal{E} & \mbox{and} &
\mathcal{E}^\top:= \ker \phi^\mathcal{E}\subseteq
\mathcal{E}.\end{array}\]Refer to $\mathcal{E}^\perp(U)$ and
$\mathcal{E}^\top(U)$ above, for every open $U\subseteq X$, to
understand the nuance between $\mathcal{E}^\perp$ and
$\mathcal{E}^\top$. }\end{scholium}

\begin{lemma}\label{lem3}
Let $\phi$ be a non-degenerate $\mathcal{A}$-bilinear form on an
$\mathcal{A}$-module $\mathcal{E}$. Then the mappings $\bot\equiv
\bot(\phi)$, $\top\equiv \top(\phi)$ have the following
properties: \begin{enumerate}\item [{$(1)$}] \begin{itemize} \item
[{$(a)$}] If $\mathcal{G}\subseteq \mathcal{H}$, then
$\mathcal{G}^\perp\supseteq \mathcal{H}^\perp$ \item [{$(b)$}] If
$\mathcal{G}\subseteq \mathcal{H}$, then
$\mathcal{G}^\top\supseteq \mathcal{H}^\top$\end{itemize} \item
[{$(2)$}] \begin{itemize}\item [{$(c)$}] $(\mathcal{G}+
\mathcal{H})^\perp= \mathcal{G}^\perp\cap \mathcal{H}^\perp$ \item
[{$(d)$}] $(\mathcal{G}+ \mathcal{H})^\top= \mathcal{G}^\top\cap
\mathcal{H}^\top$
\end{itemize}
\end{enumerate}for all sub-$\mathcal{A}$-modules $\mathcal{G}$ and
$\mathcal{H}$ of $\mathcal{E}$.
\end{lemma}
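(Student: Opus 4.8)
The plan is to prove each of the four inclusions/equalities by reducing to the presheaf (sections) level, where the statements become elementary linear-algebra facts about orthogonal complements of subsets, and then to invoke the fact that an equality (or inclusion) of sub-$\mathcal{A}$-modules can be checked stalkwise or, better, on sections over a basis of open sets. Concretely, for (a): if $\mathcal{G}\subseteq\mathcal{H}$ then for every open $U$ we have $\mathcal{G}(U)\subseteq\mathcal{H}(U)$, so any $t$ that is $\phi$-orthogonal to all of $\mathcal{H}(V)$ (for every open $V\subseteq U$) is in particular orthogonal to all of $\mathcal{G}(V)$; this gives $\mathcal{H}^\perp(U)\subseteq\mathcal{G}^\perp(U)$, using the description of $\mathcal{E}^\perp(U)$ and $\mathcal{E}^\top(U)$ recorded in Scholium~\ref{scho0}. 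Part (b) is the mirror image, using the right-orthogonal description in the same scholium. So the first order of business is simply to unwind the definitions of $\perp$ and $\top$ via Scholium~\ref{scho0} and observe monotonicity is immediate from the set-theoretic ``larger test set $\Rightarrow$ smaller annihilator'' principle.

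For (c), I would prove the two inclusions separately. The inclusion $(\mathcal{G}+\mathcal{H})^\perp\subseteq\mathcal{G}^\perp\cap\mathcal{H}^\perp$ is immediate from part (a), since $\mathcal{G}\subseteq\mathcal{G}+\mathcal{H}$ and $\mathcal{H}\subseteq\mathcal{G}+\mathcal{H}$ force $(\mathcal{G}+\mathcal{H})^\perp\subseteq\mathcal{G}^\perp$ and $(\mathcal{G}+\mathcal{H})^\perp\subseteq\mathcal{H}^\perp$. For the reverse inclusion $\mathcal{G}^\perp\cap\mathcal{H}^\perp\subseteq(\mathcal{G}+\mathcal{H})^\perp$, I would work on sections: let $U$ be open and $t\in(\mathcal{G}^\perp\cap\mathcal{H}^\perp)(U)=\mathcal{G}^\perp(U)\cap\mathcal{H}^\perp(U)$ (intersection of sub-$\mathcal{A}$-modules is computed sectionwise). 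Then for every open $V\subseteq U$ and every $s\in(\mathcal{G}+\mathcal{H})(V)$, I need $\phi_V(s,t|_V)=0$. The subtlety is that a section of $\mathcal{G}+\mathcal{H}$ over $V$ need not be globally a sum $s_1+s_2$ with $s_i\in\mathcal{G}(V)$, $s_i\in\mathcal{H}(V)$; but it is \emph{locally} such a sum, i.e.\ there is an open cover $\{V_\alpha\}$ of $V$ with $s|_{V_\alpha}=s_1^\alpha+s_2^\alpha$, $s_1^\alpha\in\mathcal{G}(V_\alpha)$, $s_2^\alpha\in\mathcal{H}(V_\alpha)$ — this is exactly how the presheaf sum sheafifies. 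Then $\phi_{V_\alpha}(s|_{V_\alpha},t|_{V_\alpha})=\phi_{V_\alpha}(s_1^\alpha,t|_{V_\alpha})+\phi_{V_\alpha}(s_2^\alpha,t|_{V_\alpha})=0$ because $t\in\mathcal{G}^\perp\cap\mathcal{H}^\perp$, and since $\phi_V(s,t|_V)$ restricts to $\phi_{V_\alpha}(s|_{V_\alpha},t|_{V_\alpha})=0$ on each member of a cover, it vanishes (the presheaf of sections of $\mathcal{A}$ is separated). Hence $t\in(\mathcal{G}+\mathcal{H})^\perp(U)$. Part (d) is proved identically with $\top$ replacing $\perp$ and the arguments of $\phi$ swapped.

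I do not expect to use non-degeneracy of $\phi$ anywhere in this lemma — it is stated as a hypothesis but all four properties are formal consequences of bilinearity and the kernel/annihilator description; I would remark on this but otherwise carry the hypothesis along harmlessly. The main (mild) obstacle is the locality issue flagged above in (c): handling sections of $\mathcal{G}+\mathcal{H}$ correctly, since the sheaf sum is the sheafification of the presheaf sum and one must pass to a cover before splitting $s$. Everything else is bookkeeping with restriction maps and the bilinearity of $\phi_V$. I would therefore structure the write-up as: (i) recall the sectionwise descriptions from Scholium~\ref{scho0}; (ii) prove (a), (b) in one line each; (iii) prove the easy inclusion in (c), (d) from (a), (b); (iv) prove the hard inclusion in (c) by the cover argument, and note (d) follows mutatis mutandis.
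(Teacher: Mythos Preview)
Your approach is essentially the same as the paper's: both arguments work sectionwise via the description in Scholium~\ref{scho0}, dispatch part~(1) as immediate, and prove (c) by checking the two inclusions on sections, with (d) handled symmetrically. Two minor differences are worth noting. First, you obtain the inclusion $(\mathcal{G}+\mathcal{H})^\perp\subseteq\mathcal{G}^\perp\cap\mathcal{H}^\perp$ as a formal consequence of part~(a), whereas the paper argues it directly from the identity $\phi_V(\mathcal{G}(V)+\mathcal{H}(V),t|_V)=\phi_V(\mathcal{G}(V),t|_V)+\phi_V(\mathcal{H}(V),t|_V)$; your route is slightly cleaner. Second, and more substantively, the paper tacitly writes $(\mathcal{G}+\mathcal{H})(V)=\mathcal{G}(V)+\mathcal{H}(V)$, while you correctly flag that the sheaf sum is the sheafification of the presheaf sum, so a section of $\mathcal{G}+\mathcal{H}$ is only \emph{locally} a sum of sections of $\mathcal{G}$ and $\mathcal{H}$; your cover argument, together with separatedness of the section presheaf of $\mathcal{A}$, closes this gap that the paper leaves implicit. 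Your remark that non-degeneracy is nowhere used is also correct; the paper carries the hypothesis but does not invoke it.
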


\begin{proof}
Assertion $(1)$ is clear. For Assertion $(2)$, we have for every
open subset $U$ of $X$ and $t\in (\mathcal{G}+
\mathcal{H})^\perp(U)$ if and only if $\phi_V((\mathcal{G}+
\mathcal{H})(V), t|_V)= \phi_V(\mathcal{G}(V), t|_V)+
\phi_V(\mathcal{H}(V), t|_V)=0$, where $V$ is an arbitrary open
subset contained in $U$. But if $\phi_V(\mathcal{G}(V), t|_V)+
\phi_V(\mathcal{H}(V), t|_V)=0$, then $\phi_V(\mathcal{G}(V),
t|_V)=0$ and similarly $\phi_V(\mathcal{H}(V), t|_V)=0;$ therefore
$(\mathcal{G}+ \mathcal{H})^\perp\subseteq \mathcal{G}^\perp\cap
\mathcal{H}^\perp.$ Conversely, let $t\in \mathcal{E}(U)$ such
that $t\in (\mathcal{G}^\perp\cap \mathcal{H}^\perp)(U):=
\mathcal{G}^\perp(U)\cap \mathcal{H}^\perp(U).$ Therefore, for
every open $V\subseteq U$, $\phi_V(\mathcal{G}(V), t|_V)=0$ and
$\phi_V(\mathcal{H}(V), t|_V)=0.$ Thus, $\phi_V(\mathcal{G}(V)+
\mathcal{H}(V), t|_V):=\phi_V((\mathcal{G}+ \mathcal{H})(V),
t|_V)=0;$ hence $\mathcal{G}^\perp\cap \mathcal{H}^\perp\subseteq
(\mathcal{G}+ \mathcal{H})^\perp.$ Part $(d)$ of Assertion $(2)$
is proved in a similar way.
\end{proof}

This particular case, in Scholium \ref{scho0}, will allow us to
define later an important instance that \textit{orthogonality}
$(:\perp, \top)$ presents: \textit{0rthosymmetry}. For the
classical case, cf. Gruenberg-Weir~\cite[p. 97]{gruenbergweir}.
For the moment, it is appropriate to state the analogue of the
\textit{symplectic Gram-Schmidt theorem.} See de
Gosson~\cite[p.12]{gosson} for the classical result. But first, we
need the following scholium.

\begin{scholium}\label{scho2}
\emph{For the purpose of Theorem \ref{theo3} below, we assume that
the pair $(X, \mathcal{A})$ is an \textit{ordered algebraized
space} with $\mathcal{A}$ a \textit{unital $\mathbb{C}$-algebra
sheaf.} Furthermore, the order of $(X, \mathcal{A})$ is such that
every nowhere-zero section of $\mathcal{A}$ is invertible, viz. if
$s\in \mathcal{A}(U)$, where $U$ is open in $X$, is such that
$s|_V(V)\neq 0$ for every open $V\subseteq U$, then $s\in
\mathcal{A}(U)^\bullet\cong
\mathcal{A}^\bullet(U)(\mathcal{A}^\bullet$ denotes the sheaf
generated by the complete presheaf $U\longmapsto
\mathcal{A}(U)^\bullet,$ where $U$ runs over the open subsets of
$X$, and $\mathcal{A}(U)^\bullet\cong \mathcal{A}^\bullet(U)$
consists of the invertible elements of the unital
$\mathbb{C}$-algebra $\mathcal{A}(U)$; cf. Mallios~\cite[pp 282,
283]{mallios}$)$.}
\end{scholium}

\begin{definition}
\emph{Let $\mathcal{E}$ be an $\mathcal{A}$-module. A
\textbf{symplectic $\mathcal{A}$-morphism} (or \textbf{symplectic
$\mathcal{A}$-form} ) on $\mathcal{E}$ is an $\mathcal{A}$-bilinear
form $\phi: \mathcal{E}\oplus \mathcal{E}\longrightarrow
\mathcal{A}$ which is
\begin{itemize}\item \textbf{skew-symmetric} (one also says \textbf{antisymmetric}):
\[\phi_U(r, s)= -\phi_U(s, r)\ \mbox{for any $r,s\in \mathcal{E}(U)$
and open subset $U\subseteq X$}\](equivalently, in view of the
bilinearity of $\phi: \phi_U(r,r)=0$ for $r\in \mathcal{E}(U)$ and
$U$ open in $X$) \item \textbf{non-degenerate}: \[\phi_U(r, s)=0\
\mbox{for all $s\in \mathcal{E}(U)$ if and only if
$r=0.$}\]\end{itemize}A \textbf{symplectic $\mathcal{A}$-module}
is a self-pairing $(\mathcal{E}, \phi)$, where $\phi$ is a
symplectic $\mathcal{A}$-form.}
\end{definition}

\begin{theorem}
\label{theo3}Let $(\mathcal{E}, \phi)$ be a free
$\mathcal{A}$-module of rank $2n$, $\phi: \mathcal{E}\oplus
\mathcal{E}\longrightarrow \mathcal{A}$ a \textsf{non-zero
skew-symmetric non-degenerate $\mathcal{A}$-bilinear form,} and $I$
and $J$ two $($possibly empty $)$ subsets of $\{1, \ldots, n\}$.
Moreover, let $A= \{r_i\in \mathcal{E}(U):\ i\in I\}$ and
$B=\{s_j\in \mathcal{E}(U):\ j\in J\}$ such that
\begin{equation}\begin{array}{lll} \phi_U(r_i, r_j)= \phi_U(s_i,
s_j)=0, & \phi_U(r_i, s_j)= \delta_{ij}, & (i, j)\in I\times
J.\label{eq6}\end{array}
\end{equation}Then, \textsf{there exists a basis} $\mathfrak{B}$
of $(\mathcal{E}(U), \phi_U)$ containing $A\cup B$.
\end{theorem}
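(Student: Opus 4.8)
The plan is to mimic the classical symplectic Gram--Schmidt argument, inducting on the ``remaining'' rank. First I would set up a working submodule: let $\mathcal{D}\subseteq \mathcal{E}$ be the free sub-$\mathcal{A}$-module generated (over $U$, hence sheaf-theoretically by sheafification) by $A\cup B$; by the hypotheses \eqref{eq6} the matrix of $\phi_U$ restricted to $\mathcal{D}(U)$ in the basis $\{r_i\}\cup\{s_j\}$ is the standard symplectic block matrix, so $\phi|_{\mathcal{D}}$ is non-degenerate. The key structural fact I would establish is an orthogonal-direct-sum decomposition $\mathcal{E}=\mathcal{D}\oplus\mathcal{D}^{\perp}$ as $\mathcal{A}$-modules, with $\phi|_{\mathcal{D}^{\perp}}$ again non-degenerate skew-symmetric; this is where Scholium~\ref{scho2} is needed, since splitting off $\mathcal{D}$ requires inverting the relevant ``diagonal'' sections (the $\delta_{ij}$ entries give units), and the invertibility of nowhere-zero sections is exactly what makes the projection $z\mapsto z-\sum_i[\phi_U(z,s_i)r_i-\phi_U(z,r_i)s_i]$ land in a genuine complementary free $\mathcal{A}$-submodule. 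Lemma~\ref{lem3}(2) then gives compatibility of $\perp$ with the internal sum, and a rank count shows $\operatorname{rk}\mathcal{D}^{\perp}=2n-|I|-|J|$, which is even.

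Next I would run the induction on $2n-|I|-|J|$. If this number is $0$ then $A\cup B$ is already a symplectic basis of $\mathcal{E}(U)$ and we are done. Otherwise, working inside $(\mathcal{D}^{\perp}(U),\phi_U)$, I would pick any nonzero section $r\in\mathcal{D}^{\perp}(U)$; non-degeneracy of $\phi|_{\mathcal{D}^{\perp}}$ produces a section $s'\in\mathcal{D}^{\perp}(U)$ with $\phi_U(r,s')=:a$ a nowhere-zero (hence invertible, by Scholium~\ref{scho2}) section of $\mathcal{A}(U)$; rescaling $s:=a^{-1}s'$ gives a hyperbolic pair $\phi_U(r,s)=1$, $\phi_U(r,r)=\phi_U(s,s)=0$. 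Adjoining $r,s$ to $A\cup B$ (as, say, $r_{k},s_{k}$ for the smallest unused index $k$) preserves the shape of \eqref{eq6}, enlarges $\mathcal{D}$ by a rank-$2$ hyperbolic plane, and decreases $2n-|I|-|J|$ by $2$; the induction hypothesis applied to $\mathcal{E}$ with the enlarged $A\cup B$ yields the desired basis $\mathfrak{B}\supseteq A\cup B$.

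I expect the main obstacle to be the splitting step $\mathcal{E}=\mathcal{D}\oplus\mathcal{D}^{\perp}$ at the level of $\mathcal{A}$-modules rather than merely at the level of sections over $U$: in the abstract (sheaf) setting one does not automatically have that kernels of $\mathcal{A}$-morphisms between free modules are free, nor that projective = free, so I would need to verify directly that the explicit projection formula above defines an $\mathcal{A}$-morphism $\mathcal{E}\to\mathcal{D}$ (using Lemma~\ref{lem2} to interpret $\phi_U(\cdot,s_i)$ and $\phi_U(\cdot,r_i)$ as sections of $\mathcal{E}^{\ast}$, hence as $\mathcal{A}$-morphisms), that its kernel is exactly $\mathcal{D}^{\perp}$, and that it is a retraction, so that $\mathcal{D}^{\perp}$ is a direct summand and therefore itself free of the complementary rank. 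A secondary, more routine, subtlety is the passage between ``nowhere-zero over $U$'' and ``restricts to something nonzero on each open $V\subseteq U$,'' i.e. ensuring the sections $\phi_U(r,s')$ arising from non-degeneracy are genuinely invertible in $\mathcal{A}(U)$; this is handled precisely by the order hypothesis in Scholium~\ref{scho2}, and I would invoke it at each inductive stage. Everything else---skew-symmetry bookkeeping, the index relabelling, and the rank arithmetic---is straightforward.
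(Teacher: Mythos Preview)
Your argument has a genuine gap when $I\neq J$. You assert that the matrix of $\phi_U$ on $\mathcal{D}(U)$ in the basis $\{r_i\}_{i\in I}\cup\{s_j\}_{j\in J}$ is ``the standard symplectic block matrix,'' hence non-degenerate; but this holds only if every $r_i$ has a partner $s_i$ and vice versa. If, say, $I=\{1\}$ and $J=\emptyset$, then $\mathcal{D}(U)=[r_1]$ is totally isotropic; if $I=\{1,2\}$ and $J=\{1\}$, then $r_2$ lies in the radical of $\mathcal{D}(U)$. In such cases $\phi|_{\mathcal{D}}$ is degenerate, your projection formula $z\mapsto z-\sum[\phi_U(z,s_i)r_i-\phi_U(z,r_i)s_i]$ has no well-defined index set (the sum requires \emph{both} $r_i$ and $s_i$), the splitting $\mathcal{E}=\mathcal{D}\oplus\mathcal{D}^{\perp}$ fails, and even your parity claim ``$2n-|I|-|J|$ is even'' is false, since $|I|+|J|$ need not be even.

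The paper addresses precisely this obstruction in a separate Case~(3): for $k\in J\setminus I$ (or $I\setminus J$) one uses non-degeneracy of $\phi$ on the \emph{whole} of $\mathcal{E}(U)$, together with Scholium~\ref{scho2}, to manufacture a partner $r_k$ with $\phi_U(r_k,s_j)=\delta_{kj}$ and $\phi_U(r_i,r_k)=0$ for all $i\in I$, enlarging $A$ until $I=J$; only then is the span genuinely symplectic and one can split and recurse. Your induction is essentially the paper's Cases~(1) and~(2) once $I=J$, but you must first supply this partner-completion step. A minor additional remark: the theorem only asks for a basis of the $\mathcal{A}(U)$-module $\mathcal{E}(U)$, so the sheaf-level splitting you worry about is not required; the paper works entirely with sections over $U$ (taking $U=X$ without loss of generality).
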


\begin{proof}
We have three cases. With no loss of generality, we assume that $U=
X$.

$(1)$ \underline{\textit{Case: $I= J= \emptyset$}} Since
$\mathcal{A}^{2n}\neq 0$ $($ we already assumed that
$\mathbb{C}\equiv \mathbb{C}_X\subseteq \mathcal{A})$, there
exists an element \[0\neq r_1\in \mathcal{E}(X)\cong
\mathcal{A}^{2n}(X)\cong \mathcal{A}(X)^{2n}\]$($take e.g. the
image (by the isomorphism $\mathcal{E}(X)\cong
\mathcal{A}^{2n}(X))$ of an element in the canonical basis of
(sections) of $\mathcal{A}^{2n}(X))$. There exists a section
$\overline{s}_1\in \mathcal{E}(X)$ such that $\phi_V({r_1}|_V,
{\overline{s}_1}|_V)\neq 0$ for any open subset $V$ in $X$ $($
such a section $\overline{s}_1$ exists; indeed, if there is no
section $\overline{s}_1:= a_1e_1+\ldots +a_{2n}e_{2n}$, where
$(e_i)_{1\leq i\leq 2n}$ is a canonical basis of $\mathcal{E}(X)$,
such that $\phi_V({r_1}|_V, {\overline{s}_1}|_V)\neq 0$ for any
open $V\subseteq X,$ then there exists an open subset $W$ of $X$
such that $\phi_W({r_1}|_W, {e_i}|_W)=0.$ But this is impossible
since $({e_i}|_W)_{1\leq i\leq 2n}$ is a basis of $\mathcal{E}(W)$
and $\phi_W$ is non-degenerate). Hence, based on the hypothesis on
$\mathcal{A}$ (cf. Scholium \ref{scho2}), $\phi_X(r_1,
\overline{s}_1)$ is invertible in $\mathcal{A}(X)$. Putting $s_1:=
u^{-1}\overline{s}_1,$ where $u\equiv \phi_X(r_1,
\overline{s}_1)\in \mathcal{A}(X),$ one gets \[\phi_X(r_1, s_1)=
1.\] Now, let us consider \[S_1:= [r_1, s_1],\]that is, the
\textit{$\mathcal{A}(X)$-plane,} spanned by $r_1$ and $s_1$ in
$\mathcal{E}(X)$, along with \textit{its orthogonal complement in
$\mathcal{E}(X)$,} i.e., \[S_1^\perp\equiv T_1:= \{t\in
\mathcal{E}(X):\ \phi_X(t, z)= 0,\ \mbox{for all $z\in
S_1$}\}.\]The sections are linearly independent, for if $s_1=
ar_1$, with $a\in \mathcal{A}(X)$, then \[1= \phi_X(r_1, s_1)=
\phi_X(r_1, ar_1)= a\phi_X(r_1, r_1)=0,\]a \textit{contradiction.}
So, $\{r_1, s_1\}$ is a basis of $S_1$. Furthermore, we prove that
\[\begin{array}{ll} (i)\ S_1\cap T_1=0, & (ii)\ S_1+ T_1=
\mathcal{E}(X). \end{array}\] Indeed, $(i)$ since $\phi_X(r_1,
s_1)\neq 0$, we have $S_1\cap T_1=0.$ On the other hand, $(ii)$
for every $z\in \mathcal{E}(X)$, one has \[z= (-\phi_X(z, r_1)s_1+
\phi_X(z, s_1)r_1)+ (z+ \phi_X(z, r_1)s_1- \phi_X(z,
s_1)r_1),\]with \[-\phi_X(z, r_1)s_1+ \phi_X(z, s_1)r_1\in
S_1,\]and \[z+ \phi_X(z, r_1)s_1- \phi_X(z, s_1)r_1\in T_1.\]Thus,
\[\mathcal{E}(X)= S_1\oplus T_1.\]\textit{The restriction
}$\phi_1\equiv \phi_{1, X}$ of $\phi_X$ to $T_1$ \textit{is
non-degenerate,} because if $z_1\in T_1$ is such that $\phi_1(z_1,
z)=0$ for all $z\in T_1$, then $z_1\in T_1^\perp$ and hence
$z_1\in T_1\cap T_1^\perp= S^\perp_1\cap T^\perp_1= (S_1+
T_1)^\perp= \mathcal{E}(X)^\perp= 0,$ (the second equality derives
from Lemma \ref{lem3}); so $z_1=0$. $(T_1, \phi_1)$ \textit{is
thus a symplectic free $\mathcal{A}(X)$-module of rank $2(n-1)$.}
Repeating the construction above $n-1$ times, we obtain a strictly
decreasing sequence \[(\mathcal{E}(X), \phi_X)\supseteq (T_1,
\phi_1)\supseteq \cdots \supseteq (T_{n-1}, \phi_{n-1})\] of
symplectic free $\mathcal{A}(X)$-modules with rank $T_k= 2(n-k)$,
$k=1, \ldots, n-1,$ and also an increasing sequence \[\{r_1,
s_1\}\subseteq \{r_1, r_2; s_1, s_2\}\subseteq \cdots \subseteq
\{r_1, \ldots, r_n; s_1, \ldots, s_n\}\]of gauges; each satisfying
the relations (\ref{eq6}).

$(2)$ \underline{\textit{Case $I=J\neq \emptyset$.}} We may assume
without loss of generality that $I= J= \{1, 2, \ldots, k\},$ and
let $S$ be the subspace spanned by $\{r_1, \ldots, r_k; s_1,
\ldots, s_k\}$. Clearly, ${\phi_X}|_S$ is non-degenerate; by
Adkins-Weintraub~\cite[Lemma (2.31), p.360]{adkins}, it follows
that $S\cap S^\perp=0.$ On the other hand, let $z\in
\mathcal{E}(X)$. One has \[z= (-\sum^k_{i=1}\phi_X(z, r_i)s_i+
\sum^k_{i=1}\phi_X(z, s_i)r_i)+ (z+\sum^k_{i=1}\phi_X(z, r_i)s_i-
\sum^k_{i=1}\phi_X(z, s_i)r_i ),\]with \[-\sum^k_{i=1}\phi_X(z,
r_i)s_i+ \sum^k_{i=1}\phi_X(z, s_i)r_i\in S,\] and \[z+
\sum^k_{i=1}\phi_X(z, r_i)s_i- \sum^k_{i=1}\phi_X(z, s_i)r_i\in
S^\perp.\]Thus, \[\mathcal{E}(X)= S\oplus S^\perp.\]Based on the
hypothesis on $S_1$ the restriction ${\phi_X}|_S$ is a symplectic
$\mathcal{A}$-bilinear form. It is also easily seen that the
restriction ${\phi_X}_{S^\perp}$ is skew-symmetric. Moreover,
since ${S}\oplus S^\perp$ and $\mathcal{E}(X)^\perp=0$, if there
exist $z_1\in S^\perp$ such that $\phi_X(z_1, z)=0$ for all $z\in
S^\perp$, then $z_1\in \mathcal{E}(X)^\perp=0,$ i.e., $z_1=0.$
Thus, ${\phi_X}|_{S^\perp}$ is non-degenerate and hence a
symplectic $\mathcal{A}$-form. Applying Case $(1)$ , we obtain a
symplectic basis of $S^\perp$, which we denote as \[\{r_{k+1},
\ldots, r_n; s_{k+1}, \ldots, s_n\}.\]Then, \[\mathfrak{B}=\{r_1,
\ldots, r_n; s_1, \ldots, s_n\}\]is a symplectic basis of
$\mathcal{E}(X)$ with the required property.

$(3)$ \underline{\textit{Case $J\setminus I\neq \emptyset$ $($or
$I\setminus J\neq \emptyset)$.}} Suppose that $k\in J\setminus I;$
since $\phi_X$ is non-degenerate there exists $\overline{r}_k\in
\mathcal{E}(X)$ such that $\phi_X(\overline{r}_k, s_k)\neq 0$ in the
sense that $\phi_V({\overline{r}_k}|_V, {s_k}|_V)\neq 0$ for any
open $V\subseteq X$. In other words, the section $v\equiv
\phi_X(\overline{r}_k, s_k)\in \mathcal{A}(X)$ is nowhere zero, and
is therefore \textit{invertible} by virtue of the property of the
$\mathbb{C}$-algebra sheaf $\mathcal{A}$, as indicated in Scholium
\ref{scho2}. So, if $r_k:= v^{-1}\overline{r}_k,$ we have
$\phi_X(r_k, s_k)=1.$ Next, let us consider the
sub-$\mathcal{A}(X)$-module $R$, spanned by $r_k$ and $s_k$, viz.
$R= [r_k, s_k].$ As in Case $(1)$, we have \[\mathcal{E}(X)= R\oplus
R^\perp.\]Clearly, for every $i\in I$, $r_i\in R^\perp.$ To show
this, fix $i$ in $I$, and assume that $r_i= a_k+ bs_k+ x$, where $a,
b\in \mathcal{A}(X)$ and $x\in R^\perp.$ So, one has
\[\begin{array}{ll} 0= \phi_X(r_i, s_k)=a, & 0= \phi_X(r_i,
r_k)=b,\end{array}\]which corroborates the claim that $r_i\in
R^\perp$ for all $i\in I$. On the other hand, let us consider the
sub-$\mathcal{A}(X)$-module, $P$, generated by $B$. As in Case
$(2)$, one shows that \[\mathcal{E}(X)= P\oplus P^\perp.\]Since
$r_k\in \mathcal{E}(X)$, there exists $a_j\in \mathcal{A}(X)$ such
that \[r_k= \sum_{j\in J}a_js_j+ x,\]where $x\in P^\perp$. For any
$j\neq k$ in $J$, one has $\phi_X(r_k, s_j)=0.$ Thus, we have found
a section $r_k\in \mathcal{E}(X)$ such that $\phi_X(r_i, r_k)=0$ for
any $i\in I$ and $\phi_X(r_k, s_j)= \delta_{kj}$ for any $j\in J$.
Then $A\cup B\cup \{r_k\}$ is a family of linearly independent
sections: the equality \[a_kr_k+ \sum_{i\in I}a_ir_i+ \sum_{j\in
J}b_js_j=0\]implies that $a_k= a_i= b_j=0.$ Repeating this process
as many times as necessary, we are lead back to Case $(2)$, and the
proof is finished.
\end{proof}

Referring to Theorem \ref{theo3}, the basis $\mathfrak{B}$ is
called a \textbf{symplectic $\mathcal{A}(U)$-basis} of
$(\mathcal{E}(U), \phi_U)$.

\begin{corollary}\label{cor1}
If $(\mathcal{E}, \phi)$ is a symplectic free $\mathcal{A}$-module
of rank $2n$, then, for every open $U\subseteq X$,
\[\mathcal{E}(U)= H_1^U\oplus \cdots \oplus H_n^U,\]where $H^U_1,
\ldots, H_n^U$ are \textsf{pairwise orthogonal non-isotropic
two-dimensional sub-$\mathcal{A}(U)$-modules.}
\end{corollary}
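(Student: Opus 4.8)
The plan is to read the decomposition straight off the symplectic Gram--Schmidt theorem. Fix an open set $U\subseteq X$. Applying Theorem \ref{theo3} with $I=J=\emptyset$ (the form $\phi$ is automatically non-zero when $n\geq 1$; the case $n=0$ is trivial), we obtain a symplectic $\mathcal{A}(U)$-basis $\mathfrak{B}=\{r_1,\dots,r_n;\,s_1,\dots,s_n\}$ of $(\mathcal{E}(U),\phi_U)$, i.e. a basis satisfying the relations (\ref{eq6}): $\phi_U(r_i,r_j)=\phi_U(s_i,s_j)=0$ and $\phi_U(r_i,s_j)=\delta_{ij}$ for all $i,j$. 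Then I would set
\[
H_i^U:=[r_i,s_i]=\mathcal{A}(U)\,r_i+\mathcal{A}(U)\,s_i,\qquad i=1,\dots,n,
\]
the $\mathcal{A}(U)$-plane spanned by $r_i$ and $s_i$, and show that these have the asserted properties.

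Since $\{r_i,s_i\}$ is part of the basis $\mathfrak{B}$, it is linearly independent over $\mathcal{A}(U)$, so $H_i^U$ is free of rank $2$, that is, two-dimensional. For non-isotropy, take $z=a r_i+b s_i\in H_i^U$ with $\phi_U(z,r_i)=\phi_U(z,s_i)=0$; expanding and using skew-symmetry together with $\phi_U(r_i,s_i)=1$ yields $-b=0$ and $a=0$, hence $z=0$, so ${\phi_U}|_{H_i^U}$ is non-degenerate (in particular non-zero), which is the sense in which $H_i^U$ is non-isotropic. For pairwise orthogonality, if $i\neq j$ then every generator of $H_i^U$ is $\phi_U$-orthogonal to every generator of $H_j^U$, since $\phi_U(r_i,r_j)=\phi_U(s_i,s_j)=0$ and $\phi_U(r_i,s_j)=\phi_U(s_i,r_j)=\delta_{ij}=0$; by $\mathcal{A}(U)$-bilinearity of $\phi_U$ this gives $\phi_U(H_i^U,H_j^U)=0$.

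Finally, because $\mathfrak{B}$ is a basis of $\mathcal{E}(U)$ and the blocks $\{r_i,s_i\}$, $i=1,\dots,n$, partition $\mathfrak{B}$, each section of $\mathcal{E}(U)$ can be written uniquely as $h_1+\dots+h_n$ with $h_i\in H_i^U$; hence $\mathcal{E}(U)=H_1^U\oplus\cdots\oplus H_n^U$. As $U$ was an arbitrary open subset of $X$, the decomposition holds for every open $U\subseteq X$, completing the argument.

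I do not anticipate any genuine obstacle: the entire substance is already packed into Theorem \ref{theo3}, and what is left is routine linear-algebra bookkeeping over the ring $\mathcal{A}(U)$. The only step warranting a line of care is the non-degeneracy of ${\phi_U}|_{H_i^U}$, which is precisely the two-by-two computation displayed above and which already appeared, in the guise of the non-degeneracy of $(T_1,\phi_1)$, inside the proof of Theorem \ref{theo3}.
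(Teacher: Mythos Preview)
Your proof is correct and follows essentially the same approach as the paper: both obtain the planes $H_i^U=[r_i,s_i]$ from a symplectic basis produced by the Gram--Schmidt construction of Theorem~\ref{theo3}. The only difference is cosmetic---the paper reruns the inductive construction of Case~(1) of Theorem~\ref{theo3} inside the proof, whereas you invoke the theorem once and read off the decomposition from the relations~(\ref{eq6}); your route is the more economical of the two.
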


\begin{proof}
The proof is similar \textit{to a good extent} to the first part
of the proof of Theorem \ref{theo3}. In fact, let $U$ be an open
subset of $X$ and $r_1\in \mathcal{E}(U)$, a \textit{nowhere-zero
section.} There exists a section $s_1$ in $\mathcal{E}(U)$ such
that $\phi_V({r_1}|_V, {s_1}|_V)\neq 0$ for any open $V\subseteq
U$. Clearly, $r_1, s_1$ must be linearly independent, and the
sub-$\mathcal{A}(U)$-module $H_1\equiv H_1^U:= [r_1, s_1],$
\textit{spanned} by $r_1$ and $s_1$, is \textit{non-isotropic.} As
in the proof of Theorem \ref{theo3}, Case $(1)$, one has
\[\mathcal{E}(U)= H_1\oplus H_1^\perp.\] The restriction
$\phi_{H_1^\perp}\equiv (\phi_U)|_{H_1^\perp}$ of $\phi_U$ to
$H_1^\perp$ is non-degenerate, because if $t\in H_1^\perp$ is such
that $\phi_{H_1^\perp}(t, z)=\phi_U(t, z)=0$ for all $z\in
H_1^\perp$, then $t\in H_1^{\perp\perp}\equiv (H_1^\perp)^\perp$
and hence $t\in H_1^\perp\cap H_1^{\perp\perp}= (H_1+
H_1^\perp)^\perp= \mathcal{E}(U)^\perp=0,$ which implies that
$t=0.$ Thus, $(H_1^\perp, \phi_{H_1^\perp})$ is a
\textit{symplectic free $\mathcal{A}(U)$-module of rank $2(n-1)$.}
Next, take a nowhere-zero $r_2\in H_1^\perp$; since $\phi_U(r_2,
r_1)= \phi_U(r_2, s_1)=0,$ there exists a section $s_2\in
H_1^\perp$ such that $\phi_V({r_2}|_V, {s_2}|_V)\neq 0$ for any
open $V\subseteq U$. As above, one has \[H^\perp_1= H_2\oplus
H_2^\perp,\]where $H_2:= [r_2, s_2].$ The \textit{direct
decomposition sum} of $\mathcal{E}(U)$ follows by repeating the
construction above $n-2$ times.
\end{proof}

Each sub-$\mathcal{A}(U)$-module $H_i^U$ in Corollary \ref{cor1}
has an ordered basis $(r_i, s_i)$ such that $(\phi_U(r_i,
s_i))|_V\equiv \phi_V({r_i}|_V, {s_i}|_V):= a_i|_V\neq 0$ for any
open subset $V$ of $U$. Then, based on the hypothesis that every
nowhere-zero section of $\mathcal{A}$ is invertible, see Scholium
\ref{scho2}, the restriction of $\phi_U$ to $H_i^U$ with respect
to the basis $(r_i, a_i^{-1}s_i)$ has matrix
\[\left(\begin{array}{rr}0 & 1\\ -1 & 0\end{array}\right).\]Hence,
we have

\begin{corollary}\label{cor2}
If $(\mathcal{E}, \phi)$ is a symplectic free $\mathcal{A}$-module
of rank $2n$, then for every open subset $U$ of $X$, there exists
an ordered basis of $\mathcal{E}(U)$ with respect to which
$\phi_U$ has matrix
\[
A^U_{2n}= \left(\begin{array}{ccccccc}\begin{tabular}{c|c}
$\begin{array}{rr} 0 & 1\\ -1 & 0\end{array}$ & \\ \hline & \\
\end{tabular}
& & \\
& \ddots & \\
& &
\begin{tabular}{c|c}
$\begin{array}{rr} 0 & 1\\ -1 & 0\end{array}$ & \\ \hline & \\
\end{tabular}
\end{array}
\right).
\]Moreover, symplectic $\mathcal{A}$-modules of the same rank are
isometric.
\end{corollary}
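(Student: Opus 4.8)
The plan is to derive both assertions from Corollary~\ref{cor1} and the invertibility hypothesis of Scholium~\ref{scho2}. For the matrix statement, fix an open $U\subseteq X$ and use Corollary~\ref{cor1} to write $\mathcal{E}(U)= H_1^U\oplus\cdots\oplus H_n^U$ with the $H_i^U$ pairwise orthogonal, non-isotropic, and free of rank $2$. As in the remark preceding the statement, choose for each $i$ an ordered basis $(r_i,s_i)$ of $H_i^U$, set $a_i:=\phi_U(r_i,s_i)$, which is nowhere zero hence invertible over $U$, and pass to the basis $(r_i,a_i^{-1}s_i)$; then the Gram matrix of $\phi_U|_{H_i^U}$ in this basis is $\left(\begin{smallmatrix}0&1\\-1&0\end{smallmatrix}\right)$. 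Since the summands are pairwise orthogonal and orthogonality for a skew-symmetric form is symmetric ($\phi_V(x,y)=0\iff\phi_V(y,x)=0$), all cross blocks vanish, so the concatenation $(r_1,a_1^{-1}s_1,\ldots,r_n,a_n^{-1}s_n)$, taken in this order, is an ordered basis of $\mathcal{E}(U)$ whose Gram matrix for $\phi_U$ is precisely the block-diagonal matrix $A^U_{2n}$.

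For the isometry statement, let $(\mathcal{E},\phi)$ and $(\mathcal{E}',\phi')$ be symplectic free $\mathcal{A}$-modules of rank $2n$. Applying the first part with $U=X$ produces symplectic gauges $\mathfrak{B}=(f_1,\ldots,f_{2n})$ of $\mathcal{E}$ and $\mathfrak{B}'=(f_1',\ldots,f_{2n}')$ of $\mathcal{E}'$ in which $\phi_X$ and $\phi'_X$ have the \emph{same} Gram matrix $A^X_{2n}$. Each gauge is by definition an $\mathcal{A}$-isomorphism from $\mathcal{A}^{2n}$, so composing the two yields an $\mathcal{A}$-isomorphism $\psi:\mathcal{E}\longrightarrow\mathcal{E}'$ with $\psi(f_i)=f_i'$ for all $i$. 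Over any open $V$, writing $s=\sum_i c_i\,(f_i|_V)$ and $t=\sum_j d_j\,(f_j|_V)$ with $c_i,d_j\in\mathcal{A}(V)$ and using $\mathcal{A}$-bilinearity together with $\phi_V(f_i|_V,f_j|_V)=(A^X_{2n})_{ij}|_V=\phi'_V(f_i'|_V,f_j'|_V)$, one gets $\phi'_V(\psi_V s,\psi_V t)=\phi_V(s,t)$. Hence $\psi$ is an isometry, and any two symplectic free $\mathcal{A}$-modules of the same rank are isometric.

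The point requiring care in both steps is the passage from a basis of the $\mathcal{A}(U)$-module of sections to an honest \emph{gauge} of the sheaf: one must know that the basis $(r_i,a_i^{-1}s_i)$ of $\mathcal{E}(U)$, and in particular the global symplectic basis of $\mathcal{E}(X)$ used for $\psi$, restricts to a basis of $\mathcal{E}(V)$ for every open $V$ and generates each stalk over $\mathcal{A}_x$ --- otherwise $f_i\mapsto f_i'$ need not induce a morphism of sheaves. This is exactly where the hypothesis of Scholium~\ref{scho2} enters: it makes $a_i^{-1}$ available as a section over $U$ and forces each submodule $T_k$ produced in the induction of Theorem~\ref{theo3} to be free of the expected rank, so the whole construction stays within free $\mathcal{A}$-modules and the resulting basis is a gauge. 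Modulo this bookkeeping, which is already implicit in Theorem~\ref{theo3} and Corollary~\ref{cor1}, the argument is a sheaf-theoretic transcription of the classical fact that symplectic spaces of equal dimension are isometric; I expect the verification that the constructed bases are gauges to be the main (if largely routine) obstacle.
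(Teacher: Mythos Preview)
Your argument is correct and follows essentially the same route as the paper: the paper's ``proof'' is the paragraph immediately preceding the corollary, which uses Corollary~\ref{cor1} to obtain the orthogonal decomposition into non-isotropic planes and then invokes Scholium~\ref{scho2} to rescale each $(r_i,s_i)$ to $(r_i,a_i^{-1}s_i)$, exactly as you do. Your treatment of the isometry assertion (mapping one symplectic gauge to the other) and your remarks on why the resulting basis is a genuine gauge simply make explicit what the paper leaves implicit.
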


\section{Orthosymmetric $\mathcal{A}$-bilinear forms}

\begin{definition}
\emph{An $\mathcal{A}$-bilinear form $\phi: \mathcal{E}\oplus
\mathcal{E}\longrightarrow \mathcal{A}$ on an $\mathcal{A}$-module
$\mathcal{E}$ is called \textbf{orthosymmetric} if the following is
true: \begin{equation}\begin{array}{lll}\phi_U(r, s)=0 & \mbox{is
equivalent to} & \phi_U(s, r)=0,\end{array}\end{equation}for all $r,
s\in \mathcal{E}(U)$, with $U$ any open subset of $X$.}
\end{definition}

It is clear that if $\phi$ is orthosymmetric, then $\bot\equiv
\bot(\phi)= \top(\phi)\equiv \top,$ i.e. $\mathcal{F}^\perp=
\mathcal{F}^\top$ for any sub-$\mathcal{A}$-module $\mathcal{F}$ of
$\mathcal{E}$. Moreover, if $\phi$ is symmetric or skew-symmetric,
then $\phi$ is orthosymmetric. The following theorem shows that the
converse of the preceding statement is true on every open subset of
$X$.

\begin{theorem}\label{theo1}
Let $\mathcal{E}$ be an $\mathcal{A}$-module and $\phi\equiv
(\phi_U): \mathcal{E}\oplus \mathcal{E}\longrightarrow
\mathcal{A}$ an orthosymmetric $\mathcal{A}$-bilinear form. Then,
\textsf{componentwise} $\phi$ is \textsf{either symmetric or
skew-symmetric.}
\end{theorem}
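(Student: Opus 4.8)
The plan is to transcribe Artin's classical argument for reflexive bilinear forms (cf.\ Artin~\cite{artin}, and Gruenberg--Weir~\cite[p.~97]{gruenbergweir}) into the present setting, working one open set at a time. Fix an open $U\subseteq X$ and regard $\phi_U$ as a bilinear form over the commutative ring $\mathcal{A}(U)$ on the module $\mathcal{E}(U)$; after the localization described in the last paragraph below we may moreover assume the ring in play is ``field-like'', in the sense that every nonzero scalar occurring can be cancelled. It suffices to show that $\phi_U$ is symmetric or skew-symmetric, which of the two possibly depending on $U$. The engine of the proof is a polarization-type identity coming from orthosymmetry: given sections $u,v,w\in\mathcal{E}(U)$, set $z:=\phi_U(u,w)\,v-\phi_U(u,v)\,w\in\mathcal{E}(U)$; bilinearity of $\phi_U$ and commutativity of $\mathcal{A}(U)$ give $\phi_U(u,z)=\phi_U(u,w)\phi_U(u,v)-\phi_U(u,v)\phi_U(u,w)=0$, so orthosymmetry of $\phi$ over $U$ yields $\phi_U(z,u)=0$, i.e.
\[\phi_U(u,w)\,\phi_U(v,u)=\phi_U(u,v)\,\phi_U(w,u)\qquad\text{for all }u,v,w\in\mathcal{E}(U),\]
and putting $w=u$ specializes this to $\phi_U(u,u)\bigl(\phi_U(v,u)-\phi_U(u,v)\bigr)=0$ for all $u,v\in\mathcal{E}(U)$.

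From here the argument splits. If $\phi_U(u,u)=0$ for every $u\in\mathcal{E}(U)$, then expanding $0=\phi_U(u+v,u+v)$ by bilinearity gives $\phi_U(u,v)+\phi_U(v,u)=0$, so $\phi_U$ is skew-symmetric. Otherwise choose $u_0\in\mathcal{E}(U)$ with $\phi_U(u_0,u_0)\neq 0$; cancelling it in the specialized identity (with $u=u_0$) shows $u_0$ is \emph{central}, i.e.\ $\phi_U(v,u_0)=\phi_U(u_0,v)$ for all $v$, and I claim $\phi_U$ is then symmetric. Indeed, for arbitrary $x,y\in\mathcal{E}(U)$: if $\phi_U(x,x)\neq 0$ or $\phi_U(y,y)\neq 0$ the specialized identity already gives $\phi_U(x,y)=\phi_U(y,x)$; if both vanish, compare the $\phi_U$-squares of $x+u_0$ and $x-u_0$, whose sum equals $2\,\phi_U(u_0,u_0)$ --- nonzero since $\tfrac12\in\mathbb{C}\subseteq\mathcal{A}$ --- so at least one of $x\pm u_0$, call it $w_0$, has nonzero $\phi_U$-square and is therefore central; then $\phi_U(w_0,y)=\phi_U(y,w_0)$ combined with the centrality of $u_0$ forces $\phi_U(x,y)=\phi_U(y,x)$.

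The step I expect to be the real obstacle is the repeated ``cancellation of a nonzero element of $\mathcal{A}(U)$'': this is harmless over a field but not over a general commutative ring, and the dichotomy genuinely fails over a bare ring (over a product of two fields one can glue a symmetric form with a skew-symmetric one). This is exactly where the sheaf structure, and the assumption that $\phi$ is orthosymmetric on \emph{every} open set, must be used: one passes to open $V\subseteq U$ on which the relevant section --- e.g.\ $\phi_V(u_0|_V,u_0|_V)$ --- is nowhere zero and hence invertible, in the spirit of the property of $\mathcal{A}$ recorded in Scholium~\ref{scho2}, runs the argument above on each such $V$, and glues, the ``componentwise'' wording of the statement accommodating the fact that different open sets (equivalently, different connected pieces of $X$) may fall on opposite sides of the dichotomy. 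Arranging this localization so that it is compatible with the polarization identity above, and verifying that orthosymmetry is inherited by all the restrictions, is the part needing genuine care.
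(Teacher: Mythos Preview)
Your proposal and the paper's proof share the same engine: both derive the polarization identity $\phi_U(u,w)\phi_U(v,u)=\phi_U(u,v)\phi_U(w,u)$ from orthosymmetry, specialize to $\phi_U(u,u)\bigl(\phi_U(v,u)-\phi_U(u,v)\bigr)=0$, and split into the alternating versus non-alternating cases. The difference is in how the dichotomy is completed. You argue ``not alternating $\Rightarrow$ symmetric'' by fixing a single witness $u_0$ with $\phi_U(u_0,u_0)\neq 0$, showing it is central, and then handling an arbitrary pair $x,y$ via the $x\pm u_0$ trick. The paper instead argues the contrapositive ``not symmetric $\Rightarrow$ alternating'': it fixes a pair $r,s$ with $\phi_V(r|_V,s|_V)\neq\phi_V(s|_V,r|_V)$ on \emph{every} open $V\subseteq U$, and then runs a fairly long case analysis (its cases~(A) and~(B), with further sub-splits on whether various restrictions agree or not) to force $\phi_U(t,t)=0$ for all~$t$. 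Your route is shorter and closer to Artin's original; the paper's route has the virtue of making the sheaf-theoretic localization explicit at every step rather than deferring it.

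On that last point: the ``obstacle'' you flag is exactly where the paper invests its effort, and your diagnosis is accurate. The paper never cancels a merely nonzero section; it always first restricts to an open set on which the relevant section is \emph{nowhere} zero (this is what the hypothesis ``$\phi_V(r|_V,s|_V)\neq\phi_V(s|_V,r|_V)$ for every open $V$'' buys), and it uses continuity of sections to patch conclusions obtained on complementary open pieces back together. Your sketch of the localization is correct in outline, but note that Theorem~\ref{theo1} is stated \emph{without} assuming Scholium~\ref{scho2}, so you should not invoke ``nowhere-zero $\Rightarrow$ invertible'' directly; what is actually used is only that a product of sections vanishing forces one factor to vanish on each stalk, together with the sheaf axiom. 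If you carry your $u_0$-argument through stalkwise (or on the open set where $\phi(u_0,u_0)$ is nonzero) and glue, you recover a complete proof by your route.
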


\begin{proof}
Let $U$ be an open subset of $X$, and $r, s, t\in \mathcal{E}(U)$.
Clearly, we have \[\phi_U(r, \phi_U(r, t)s)- \phi_U(r, \phi_U(r,
s)t)= \phi_U(r, t)\phi_U(r, s)- \phi_U(r, s)\phi_U(r, t)=0,\]but
\[\phi_U(r, \phi_U(r, t)s- \phi_U(r, s)t)=0\]is equivalent to
\[\phi_U(\phi_U(r, t)s- \phi_U(r, s)t, r)=0;\]thus we obtain
\begin{equation}\label{eq1}
\phi_U(r, t)\phi_U(s, r)= \phi_U(r, s)\phi_U(t, r).
\end{equation}For $t=r$, $\phi_U(r, r)\phi_U(s, r)= \phi_U(r,
s)\phi_U(r, r).$ If \begin{equation}\begin{array}{ll}\phi_V(r|_V,
s|_V)\neq \phi_V(s|_V, r|_V), & \mbox{for any open $V\subseteq U$},
\end{array}\label{eq2}\end{equation}then \[\phi_U(r, r)=0.\](We note in
passing that (\ref{eq2}) suggests that both $\phi_V(r|_V, s|_V)$
and $\phi_V(s|_V, r|_V)$ are nowhere zero on $V$, because if, for
instance, $\phi_V(r|_V, s|_V)(x)=0$ for some $x\in V$ then
$\phi_V(r|_V, s|_V)=0$ on some open neighborhood $R\subseteq V$ of
$x$ (cf. Mallios~\cite[(3.7), p.13]{mallios}), i.e., assuming that
$(\rho^U_V)$ and $(\sigma^U_V)$ are the restriction maps for the
presheaves of sections of $\mathcal{E}$ and $\mathcal{A}$,
respectively, we have \[\sigma^U_R(\phi_U(s, r))=
\phi_R(\rho^U_R(s), \rho^U_R(r))\equiv \phi_R(s|_R,
r|_R)=0,\]which, by hypothesis, is equivalent to $\phi_R(r|_R,
s|_R)=0.$ That is a contradiction to (\ref{eq2}).)

Similarly, as \[\phi_U(s, \phi_U(s, t)r)- \phi_U(s, \phi_U(s,
r)t)=0,\]which, obviously, leads to
\begin{equation}\label{eq3}\phi_U(s, t)\phi_U(r, s)= \phi_U(s,
r)\phi_U(t, s),\end{equation}one has, for $t=s$, \[\phi_U(s,
s)\phi_U(r, s)= \phi_U(s, r)\phi_U(s, s).\]Using (\ref{eq2}), we
have \[\phi_U(s, s)=0.\]We actually have \textit{more} than just
what we have obtained so far. Indeed, if (\ref{eq2}) holds, then
$\phi_U(t, t)=0$ for all $t\in \mathcal{E}(U)$. We prove this
statement as follows.

(A) Let $\phi_V(r|_V, t|_V)\neq \phi_V(t|_V, r|_V)$ for any open
$V\subseteq U$. Since \begin{equation}\label{eq4}\phi_U(t,
r)\phi_U(s, t)= \phi_U(t, s)\phi_U(r, t),
\end{equation}by putting $s=t$, we have $\phi_U(t, t)=0.$

(B) Suppose that there exists an open $W\subseteq U$ such that
$\phi_W(r|_W, t|_W)= \phi_W(t|_W, r|_W).$ Then, by virtue of
(\ref{eq1}) and since $\phi_W(r|_W, s|_W)\neq \phi_W(s|_W, r|_W)$
everywhere on $W$, it follows that \[\phi_W(r|_W, t|_W)=0.\] On the
other hand, suppose that $\phi_V(s|_V, t|_V)\neq \phi_V(t|_V, s|_V)$
for any open $V\subseteq U$. Putting $r=t$ in (\ref{eq4}), one gets
$\phi_U(t, t)=0.$ Now, assume that there exists an open $T\subseteq
U$ such that $\phi_T(s|_T, t|_T)= \phi_U(t|_T, s|_T)$ and for any
open subset $V\subseteq U\setminus \overline{T},$ where
$\overline{T}$ is the closure of $T$ in $X$, $\phi_V(s|_V, t|_V)\neq
\phi_V(t|_V, s|_V).$ By virtue of (\ref{eq3}) and of \[\phi_T(s|_T,
r|_T)\neq \phi_T(r|_T, s|_T),\]it follows that \[\phi_T(s|_T, t|_T)=
\phi_T(t|_T, s|_T)=0.\]Hence,
\[\phi_T(r|_T+ t|_T, s|_T)= \phi_T(r|_T, s|_T)\neq \phi_T(s|_T,
r|_T)= \phi_T(s|_T, r|_T+ t|_T),\]and if we substitute $r|_T+ t|_T$
and $s|_T$ for $t|_V$ and $r|_V$ respectively in (A), we get
\[\phi_T(r|_T+ t|_T, r|_T+ t|_T)=0.\]But $\phi_T(r|_T,
r|_T)=0$ (since $\phi_U(r, r)=0$ and $T\subseteq U$ is open), then
if $\phi_T(r|_T, t|_T)= \phi_T(t|_T, r|_T)=0,$ one has
\begin{equation}\label{eq5}\phi_T(t|_T, t|_T)=0.\end{equation} If
$\phi_T(r|_T, t|_T)\neq 0\neq \phi_T(t|_T, r|_T)$ everywhere on $T$,
and $\phi_T(r|_T, t|_T)\neq \phi_T(t|_T, r|_T)$, we deduce from
(\ref{eq4}), by putting $s=t,$ $\phi_T(t|_T, t|_T)=0.$ If instead we
have $\phi_T(r|_T, t|_T)= \phi_T(t|_T, r|_T)$, we will end up with
\[\phi_T(r|_T, t|_T)=\phi_T(t|_T, r|_T)=0,\]which leads to
(\ref{eq5}) as previously shown. Next, $\phi_V(s|_V, t|_V)\neq
\phi_V(t|_V, s|_V)$ for every open $V\subseteq U\setminus
\overline{T}$, so $\phi_V(t|_V, t|_V)=0$ for every such $V$;
coupling the latter observation with (\ref{eq5}) and the fact that
sections are continuous, one gets in this case too that $\phi_U(t,
t)=0.$

We have shown that there are only two cases: either $\phi_U(r, r)=0$
for all $r\in \mathcal{E}(U),$ or for some $r\in \mathcal{E}(U)$,
$\phi_U(r, r)\neq 0,$ from which we deduce that $\phi_U(r, s)=
\phi_U(s, r)$ for all $r, s\in \mathcal{E}(U).$

Finally, we notice in ending the proof that if $\phi_U(r, r)=0$ for
all $r\in \mathcal{E}(U)$, then \[\phi_U(r, s)= -\phi_U(s, r)\]for
all $r, s\in \mathcal{E}(U)$.
\end{proof}

\begin{scholium}
\emph{In connection with the proof of Theorem \ref{theo1}, if there
exists an open subset $L\subseteq T$ such that $\phi_L(r|_L, t|_L)=
\phi_L(t|_L, r|_L)=0$ and $\phi_V(r|_V, t|_V)\neq \phi_V(t|_V,
r|_V)$ for every $V\subseteq T\setminus \overline{L},$ where
$\overline{L}$ is the closure of $L$ in $X$, then $\phi_L(t|_L,
t|_L)=0$ and $\phi_V(t|_V, t|_V)=0$ for every open $V\subseteq
T\setminus \overline{L}.$ Hence, $\phi_T(t|_T, t|_T)=0.$}
\end{scholium}

Referring still to Theorem \ref{theo1}, if $\phi_U$ is symmetric,
the geometry is called \textbf{orthogonal.} If $\phi_U$ is
skew-symmetric, the geometry is called \textbf{symplectic.} No other
case can occur if $\phi$ must be orthosymmetric. A \textit{pairing}
$(\mathcal{E}, \phi)$ is called \textit{symmetric} if every $\phi_U$
is symmetric, and \textit{skew-symmetric} if every $\phi_U$ is
skew-symmetric.

\begin{definition}
\emph{Let $(\mathcal{E}, \phi)\equiv [(\mathcal{E}, \phi);
\mathcal{A}]\equiv [((\mathcal{E}, \mathcal{E}); \phi);
\mathcal{A}]$ be a \textit{self-pairing} of an $\mathcal{A}$-module
$\mathcal{E}$, where $\phi$ is orthosymmetric. Then, by the
\textbf{radical} of $\mathcal{E}$, we mean the \textit{orthogonal}
$\mathcal{E}^\perp$. If $\mathcal{F}$ is a
\textit{sub-$\mathcal{A}$-module} of $\mathcal{E}$, the
\textit{radical,} $\mbox{rad}\ \mathcal{F}$, of $\mathcal{F}$ is
defined as $\mathcal{F}\cap \mathcal{F}^\perp.$ If rad
$\mathcal{F}=0$, $\mathcal{F}$ is said to be \textbf{non-isotropic;}
otherwise, it is called \textbf{isotropic.}}
\end{definition}

\begin{lemma}Let $(\mathcal{E}, \phi)$ be an $\mathcal{A}$-module
and $\mathcal{F}$ a sub-$\mathcal{A}$-module of $\mathcal{E}$. If
$\phi$ is orthosymmetric and $\mathcal{E}= \mathcal{F}\oplus
\mathcal{E}^\perp$, then $\mathcal{F}$ is non-isotropic.
\end{lemma}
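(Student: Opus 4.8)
The plan is to show that $\operatorname{rad}\mathcal{F}:=\mathcal{F}\cap\mathcal{F}^\perp=0$ by exploiting the direct sum decomposition $\mathcal{E}=\mathcal{F}\oplus\mathcal{E}^\perp$ together with the defining property of the radical $\mathcal{E}^\perp=\ker\phi^\mathcal{E}$. Since all the $\mathcal{A}$-modules here are subsheaves of $\mathcal{E}$ and the relevant equalities are stalk-wise (or, as in the earlier lemmas, can be checked on sections over open sets), I would argue on sections: fix an open $U\subseteq X$ and take $t\in(\mathcal{F}\cap\mathcal{F}^\perp)(U)$, with the goal of deducing $t=0$.

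First I would record what membership in each piece means. Because $\phi$ is orthosymmetric, by the remark following Definition 2.1 we have $\mathcal{F}^\perp=\mathcal{F}^\top$, so $t\in\mathcal{F}^\perp(U)$ gives $\phi_V(t|_V,\mathcal{F}(V))=0$ for all open $V\subseteq U$ as well. Now pick an arbitrary section $z\in\mathcal{E}(V)$ for $V\subseteq U$ open, and use the hypothesis $\mathcal{E}=\mathcal{F}\oplus\mathcal{E}^\perp$ to write $z=f+e$ with $f\in\mathcal{F}(V)$ and $e\in\mathcal{E}^\perp(V)$ (the decomposition of the sheaf direct sum passes to sections over $V$). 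Then $\phi_V(t|_V,z)=\phi_V(t|_V,f)+\phi_V(t|_V,e)$. The first term vanishes because $t|_V\in\mathcal{F}^\perp(V)=\mathcal{F}^\top(V)$ and $f\in\mathcal{F}(V)$; the second term vanishes because $e\in\mathcal{E}^\perp(V)=\ker\phi^\mathcal{E}$, so $\phi_V(t|_V,e)=-\phi_V(e,t|_V)=0$ (using orthosymmetry once more, or directly the $\top$-side) — more simply, $e\in\mathcal{E}^\perp(V)$ means $\phi_W(\mathcal{E}(W),e|_W)=0$ for all open $W\subseteq V$, hence $\phi_V(t|_V,e)=0$ after invoking orthosymmetry to swap the arguments. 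Therefore $\phi_V(t|_V,z)=0$ for every open $V\subseteq U$ and every $z\in\mathcal{E}(V)$, which is exactly the statement that $t\in\mathcal{E}^\top(U)=\mathcal{E}^\perp(U)$ (Scholium 1.4, together with orthosymmetry).

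So $t$ now lies in $\mathcal{F}(U)\cap\mathcal{E}^\perp(U)$; but $\mathcal{F}$ and $\mathcal{E}^\perp$ are complementary summands of $\mathcal{E}$, so this intersection is $0$, giving $t=0$. Hence $\operatorname{rad}\mathcal{F}=0$ and $\mathcal{F}$ is non-isotropic by Definition 2.3. The one point that needs a little care — and which I expect to be the main (mild) obstacle — is the bookkeeping of where orthosymmetry is used to convert between left and right orthogonals: one must be consistent about whether $t$ is being tested from the left or the right of $\phi$, and make sure that the identification $\mathcal{F}^\perp=\mathcal{F}^\top$ (valid for orthosymmetric $\phi$ by the remark after Definition 2.1, and which at the section level uses that $\phi_V(a,b)=0\Leftrightarrow\phi_V(b,a)=0$) is invoked at the right spots. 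Everything else is the routine transfer of a stalk/section-level linear-algebra identity — namely that in $\mathcal{F}\oplus\mathcal{E}^\perp$ a vector orthogonal to $\mathcal{F}$ is automatically orthogonal to all of $\mathcal{E}$ — into the sheaf-theoretic language already set up in Scholium 1.4 and Lemma 1.5.
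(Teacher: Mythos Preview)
Your proof is correct and follows essentially the same route as the paper's: both arguments show that any section of $\mathcal{F}^\perp$ (or of $\mathcal{F}\cap\mathcal{F}^\perp$) already lies in $\mathcal{E}^\perp$, by testing against $\mathcal{F}$ and $\mathcal{E}^\perp$ separately and then using the direct-sum decomposition to conclude. The only cosmetic differences are that the paper keeps the test section on the \emph{right} of $\phi$ throughout (so only one invocation of $\mathcal{E}^\perp=\mathcal{E}^\top$ is needed, rather than your two or three swaps), and it proves the slightly stronger intermediate inclusion $\mathcal{F}^\perp(U)\subseteq\mathcal{E}^\perp(U)$ before intersecting with $\mathcal{F}(U)$; also, your handling of $\phi_V(t|_V,e)$ is over-cautious --- since $e\in\mathcal{E}^\perp(V)$ already gives $\phi_V(t|_V,e)=0$ directly, no orthosymmetry is needed there.
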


\begin{proof}
Let $U$ be an open subset of $X$, and $r\in \mathcal{F}^\perp(U)$,
i.e. $\phi_V(\mathcal{F}(V), r|_V)=0$ for any open $V\subseteq U$.
But $\phi_V(\mathcal{E}^\perp(V), r|_V)=
\phi_V(\mathcal{E}^\top(V), r|_V)=0$ for any open $V\subseteq U$,
because $\mathcal{E}^\perp= \mathcal{E}^\top$, and therefore
\[\phi_V(\mathcal{F}(V)+ \mathcal{E}^\perp(V), r|_V)=
\phi_V(\mathcal{E}(V), r|_V)=0\]for any open $V\subseteq U$.
Hence, $r\in \mathcal{E}^\perp(U).$ We have thus  $
\mathcal{F}^\perp(U)\subseteq \mathcal{E}^\perp(U),$ so that
$\mathcal{F}(U)\cap \mathcal{F}^\perp(U)= (\mathcal{F}\cap
\mathcal{F}^\perp)(U):= (\mbox{rad}\mathcal{F})(U)=0.$
\end{proof}

\begin{definition}
\emph{Let $\mathcal{E}$ be an $\mathcal{A}$-module. An
$\mathcal{A}$-endomorphism $\phi\in \mbox{End}\ \mathcal{E}$ is
called \textbf{$\mathcal{A}$-involution} if $\phi^2=
\mbox{Id}_\mathcal{E}.$ An \textbf{$\mathcal{A}$-projection} is an
$\mathcal{A}$-endomorphism $p\in \mbox{End}\ \mathcal{E}$ such
that $p^2= p,$ in other words $p$ is \textit{idempotent}. The
$\mathcal{A}$-morphism $q\equiv \mbox{Id}_\mathcal{E}-p$ is
clearly an \textit{$\mathcal{A}$-projection;} $p$ and $q$ are
called \textbf{supplementary $\mathcal{A}$-projections.}}
\end{definition}

\begin{lemma}\label{lem1}
Let $(\mathcal{E}, \phi)$ be a free $\mathcal{A}$-module of finite
rank. Then, every non-isotropic free sub-$\mathcal{A}$-module
$\mathcal{F}$ of $\mathcal{E}$ is a direct summand of
$\mathcal{E}$; viz. \[\mathcal{E}= \mathcal{F}\bot\
\mathcal{F}^\perp.\]
\end{lemma}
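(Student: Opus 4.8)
The plan is to mimic the classical linear-algebra argument: from the hypothesis that $\mathcal{F}$ is non-isotropic, i.e. $\operatorname{rad}\mathcal{F}=\mathcal{F}\cap\mathcal{F}^\perp=0$, I want to produce the complementary summand $\mathcal{F}^\perp$ and verify that $\mathcal{E}=\mathcal{F}\oplus\mathcal{F}^\perp$ with the two summands $\phi$-orthogonal. The orthogonality of the summands is immediate from the definition of $\mathcal{F}^\perp$ (this is what the notation $\mathcal{F}\bot\ \mathcal{F}^\perp$ records), and $\mathcal{F}\cap\mathcal{F}^\perp=0$ is exactly the non-isotropy hypothesis. So the whole content is the surjectivity: $\mathcal{E}=\mathcal{F}+\mathcal{F}^\perp$.

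First I would reduce to the level of sections over an open $U$: since $\mathcal{F}$ is free of finite rank and non-isotropic, and using the Lemma identifying $\mathcal{F}^\perp(U)$ with $\mathcal{F}(U)^\perp$ for free pairings, it suffices to show $\mathcal{E}(U)=\mathcal{F}(U)+\mathcal{F}(U)^\perp$ for every open $U$, and then pass to stalks / sheafify. Next, the key step: $\phi$ being orthosymmetric, Theorem~\ref{theo1} tells us that on each $U$ the form $\phi_U$ is either symmetric or skew-symmetric, so the restriction $\phi_U|_{\mathcal{F}(U)}$ is a (symmetric or skew-symmetric) $\mathcal{A}(U)$-bilinear form with trivial radical. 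One then shows this restricted form is \emph{non-degenerate} in the strong sense that the induced map $\mathcal{F}(U)\to\mathcal{F}(U)^\ast=\operatorname{Hom}_{\mathcal{A}(U)}(\mathcal{F}(U),\mathcal{A}(U))$ is an isomorphism — here freeness of $\mathcal{F}$ of finite rank is essential, so that the induced map is represented by a matrix (Gram matrix of a basis $\{f_1,\dots,f_m\}$ of $\mathcal{F}(U)$) and triviality of the radical forces this matrix to be invertible over $\mathcal{A}(U)$ (invoking the Scholium~\ref{scho2}-type hypothesis that a nowhere-zero determinant section is invertible, exactly as in the proof of Theorem~\ref{theo3}). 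Given that isomorphism, for any $z\in\mathcal{E}(U)$ the functional $f\mapsto\phi_U(z,f)$ on $\mathcal{F}(U)$ is realized by a unique $z_0\in\mathcal{F}(U)$, i.e. $\phi_U(z-z_0,f)=0$ for all $f\in\mathcal{F}(U)$, whence $z-z_0\in\mathcal{F}(U)^\perp$ and $z=z_0+(z-z_0)\in\mathcal{F}(U)+\mathcal{F}(U)^\perp$. Concretely one writes $z_0=\sum_{i,j}(G^{-1})_{ij}\,\phi_U(z,f_j)\,f_i$ with $G=(\phi_U(f_i,f_j))$, exactly the projection formula used for $S_1$ and for $S$ in the proof of Theorem~\ref{theo3}.

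Finally I would assemble: $\mathcal{F}(U)\cap\mathcal{F}(U)^\perp=(\operatorname{rad}\mathcal{F})(U)=0$ by hypothesis, the two summands are mutually $\phi$-orthogonal by definition of $\perp$, and the sum is all of $\mathcal{E}(U)$ by the previous paragraph; hence $\mathcal{E}(U)=\mathcal{F}(U)\oplus\mathcal{F}(U)^\perp$ as an orthogonal direct sum, and since this holds naturally over every open $U$ it upgrades to a decomposition of $\mathcal{A}$-modules $\mathcal{E}=\mathcal{F}\bot\ \mathcal{F}^\perp$ (using the sheafification compatibility recalled in the preliminaries, and the freeness so that $\mathcal{F}^\perp$ is itself an $\mathcal{A}$-module with $\mathcal{F}^\perp(U)=\mathcal{F}(U)^\perp$). \textbf{The main obstacle} is the invertibility of the Gram matrix $G$ over the (possibly singularity-laden) algebra sheaf $\mathcal{A}$: one must argue that $\det G$ is a \emph{nowhere-zero} section — if it vanished on some open set, the argument of Theorem~\ref{theo3} Case (1) produces a nonzero section of $\mathcal{F}$ restricting into $\operatorname{rad}\mathcal{F}$ there, contradicting non-isotropy — and then invoke the standing hypothesis that nowhere-zero sections of $\mathcal{A}$ are invertible. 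A secondary subtlety is making the stalkwise/open-set decomposition genuinely functorial so that it glues to a sheaf direct sum; this is routine given the presheaf-of-$\mathcal{A}(U)$-modules formalism set up at the start, but should be stated.
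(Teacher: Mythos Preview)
Your approach is essentially the paper's: both construct an $\mathcal{A}(U)$-projection $p:\mathcal{E}(U)\to\mathcal{F}(U)$ by using the isomorphism $\mathcal{F}(U)\cong\mathcal{F}^\ast(U)$ (coming from non-degeneracy of $\phi|_\mathcal{F}$) to represent the functional $s\mapsto\phi_V(t|_V,s)$ by a unique $p(t)\in\mathcal{F}(U)$, and then decompose $t=p(t)+(t-p(t))\in\mathcal{F}(U)+\mathcal{F}^\perp(U)$. You are more explicit than the paper about the Gram-matrix inversion and the reliance on Scholium~\ref{scho2} (which the paper leaves tacit at this point), while your detour through Theorem~\ref{theo1} is unnecessary---neither argument actually uses the symmetric/skew-symmetric dichotomy.
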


\begin{proof}
Let us consider for any open subset $U\subseteq X$ a section $t\in
\mathcal{E}(U)$ and an $\mathcal{A}|_U$-\textit{form}
$\mathcal{F}|_U\longrightarrow \mathcal{A}|_U,$ defined as
follows: given any open $V\subseteq U$ and $s\in
\mathcal{F}|_U(V)= \mathcal{F}(V)$, one has \[s\longmapsto
\phi_V(t|_V, s).\] Since $\mathcal{F}$ is non-isotropic, the
restriction $\phi|_\mathcal{F}$ of $\phi$ on $\mathcal{F}$ is
non-degenerate; consequently the above $\mathcal{A}|_U$-form may
be represented by a unique element (in fact, a section)
$p_U(t)\equiv p(t)\in \mathcal{F}(U)\cong \mathcal{F}^\ast(U)$ in
such a way that \[\phi_V(t|_V, s)= (\phi|_\mathcal{F})_V(p(t)|_V,
s)= \phi_V(p(t)|_V, s)\]for all $s\in \mathcal{F}(V)$. For $r,
t\in \mathcal{E}(U),$ we have \[\phi_V((r+t)|_V, s)=
\phi_V(p(r+t)|_V, s),\]and on the other hand
\[\begin{array}{lll}\phi_V((r+t)|_V, s) & = & \phi_V(r|_V, s)+
\phi_V(t|_V, s)\\ & = & \phi_V(p(r)|_V, s)+ \phi_V(p(t)|_V, s)\\ &
= & \phi_V(p(r)|_V+ p(t)|_V, s),\end{array}\]for all $s\in
\mathcal{F}(V)$ and where $V$ is open in $U$. But for every $t\in
\mathcal{E}(U),$ $p(t)$ is unique, therefore $p(r+t)= p(r)+ p(t).$
Likewise, one shows that for all $\alpha\in \mathcal{A}(U)$,
$p(\alpha t)=\alpha p(t).$ The observation undertaken about $p$
means that $p: \mathcal{E}(U)\longrightarrow \mathcal{E}(U)$ is
$\mathcal{A}(U)$-linear. Next, since $p^2=p$, then the
$\mathcal{A}(U)$-morphism $p: \mathcal{E}(U)\longrightarrow
\mathcal{F}(U)$ is an $\mathcal{A}(U)$-projection. Furthermore,
since \[\phi_V((t- p(t))|_V, s)= \phi_V(t|_V- p(t)|_V, s)=0\] for
all $t\in \mathcal{E}(U)$ and $s\in \mathcal{F}(V),$ with $V$ open
in $U$, the supplementary $\mathcal{A}(U)$-projection $q:= I-p$ is
such that for all $t\in \mathcal{E}(U)$, $q(t)\equiv (I-p)(t)\in
\mathcal{F}^\perp(U),$ i.e. $q$ maps $\mathcal{E}(U)$ on
$\mathcal{F}^\perp(U).$ Hence, every element $t\in
\mathcal{E}(U)$, where $U$ runs over the open subsets of $X$, may
be written as \[t= p(t)+ (t-p(t))\]with $p(t)\in \mathcal{F}(U)$
and $t-p(t)\in \mathcal{F}^\perp(U)$, thus \[\mathcal{E}(U)=
\mathcal{F}(U)\oplus \mathcal{F}^\perp(U)= (\mathcal{F}\oplus
\mathcal{F}^\perp)(U)\]within $\mathcal{A}(U)$-isomorphisms (see
cf. Mallios~\cite[relation (3.14), p.122]{mallios} for the
$\mathcal{A}(U)$-isomorphism $\mathcal{F}(U)\oplus
\mathcal{F}^\perp(U)= (\mathcal{F}\oplus \mathcal{F}^\perp)(U)$).
Finally, since $\mathcal{F}$ is non-isotropic, it follows that
\[\mathcal{E}(U)= (\mathcal{F}\bot\ \mathcal{F}^\perp)(U)\]for
every open $U\subseteq X$. Thus, we reach the sought
$\mathcal{A}$-isomorphism of the lemma.
\end{proof}

\begin{definition}
\emph{A \textbf{convenient $\mathcal{A}$-module} is a
\textit{self-pairing} $(\mathcal{E}, \phi)$, where $\mathcal{E}$ is
a \textit{free $\mathcal{A}$-module of finite rank} and $\phi$ an
\textit{orthosymmetric $\mathcal{A}$-bilinear form,} such that the
following conditions are satisfied. \begin{enumerate} \item
[{$(1)$}] If $\mathcal{F}$ is a \textit{free
sub-$\mathcal{A}$-module} of $\mathcal{E}$, then the
\textit{orthogonal} $\mathcal{F}^\perp$ and the \textit{radical} rad
$\mathcal{F}$ are \textit{free sub-$\mathcal{A}$-modules} of
$\mathcal{E}$.
\item [{$(2)$}] Every \textit{free sub-$\mathcal{A}$-module} $\mathcal{F}$ of
$\mathcal{E}$ is \textit{orthogonally reflexive,} i.e.
$(\mathcal{F}^\perp)^\perp\equiv \mathcal{F}^{\perp\perp}=
\mathcal{F}.$ \item [{$(3)$}] The \textit{intersection of any two
free sub-$\mathcal{A}$-modules} of $\mathcal{E}$ is a \textit{free
sub-$\mathcal{A}$-module.} \end{enumerate}}
\end{definition}

\begin{lemma}
If $(\mathcal{E}, \phi)$ is a convenient $\mathcal{A}$-module, then,
given any two free sub-$\mathcal{A}$-modules $\mathcal{G}$ and
$\mathcal{H}$ of $\mathcal{E}$, one has \[(\mathcal{G}\cap
\mathcal{H})^\perp= \mathcal{G}^\perp+ \mathcal{H}^\perp.\]
\end{lemma}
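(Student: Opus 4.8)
The plan is to deduce the identity $(\mathcal{G}\cap\mathcal{H})^\perp=\mathcal{G}^\perp+\mathcal{H}^\perp$ from orthogonal reflexivity (condition $(2)$ of a convenient $\mathcal{A}$-module) together with the lattice identities of Lemma~\ref{lem3}, rather than by a direct sections-level computation. The point is that Lemma~\ref{lem3}$(c)$ already gives us the \emph{sum-to-intersection} law $(\mathcal{G}'+\mathcal{H}')^\perp=\mathcal{G}'^\perp\cap\mathcal{H}'^\perp$ for all sub-$\mathcal{A}$-modules; reflexivity is exactly what is needed to dualize this into an \emph{intersection-to-sum} law for free sub-$\mathcal{A}$-modules.

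First I would record that, for a convenient $\mathcal{A}$-module, if $\mathcal{G}$ and $\mathcal{H}$ are free sub-$\mathcal{A}$-modules then so are $\mathcal{G}^\perp$, $\mathcal{H}^\perp$ (condition $(1)$), $\mathcal{G}^\perp+\mathcal{H}^\perp$ need not obviously be free, but $\mathcal{G}\cap\mathcal{H}$ is free (condition $(3)$), and hence $(\mathcal{G}\cap\mathcal{H})^\perp$ is free as well. Then I would apply Lemma~\ref{lem3}$(c)$ to the pair $\mathcal{G}^\perp,\mathcal{H}^\perp$:
\[
(\mathcal{G}^\perp+\mathcal{H}^\perp)^\perp=\mathcal{G}^{\perp\perp}\cap\mathcal{H}^{\perp\perp}=\mathcal{G}\cap\mathcal{H},
\]
where the last step uses orthogonal reflexivity $(2)$ applied to the free sub-$\mathcal{A}$-modules $\mathcal{G}$ and $\mathcal{H}$. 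Taking orthogonals of both sides and using reflexivity once more — this time applied to the free sub-$\mathcal{A}$-module $\mathcal{G}^\perp+\mathcal{H}^\perp$, which requires knowing that this sum is a \emph{free} sub-$\mathcal{A}$-module so that condition $(2)$ is available — gives
\[
\mathcal{G}^\perp+\mathcal{H}^\perp=(\mathcal{G}^\perp+\mathcal{H}^\perp)^{\perp\perp}=(\mathcal{G}\cap\mathcal{H})^\perp,
\]
which is the assertion.

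The one-sided inclusion $\mathcal{G}^\perp+\mathcal{H}^\perp\subseteq(\mathcal{G}\cap\mathcal{H})^\perp$ is cheap and does not need any of the convenient-module axioms: from $\mathcal{G}\cap\mathcal{H}\subseteq\mathcal{G}$ and $\mathcal{G}\cap\mathcal{H}\subseteq\mathcal{H}$, Lemma~\ref{lem3}$(a)$ gives $\mathcal{G}^\perp\subseteq(\mathcal{G}\cap\mathcal{H})^\perp$ and $\mathcal{H}^\perp\subseteq(\mathcal{G}\cap\mathcal{H})^\perp$, hence the sum lies inside. So the real content, and the step I expect to be the main obstacle, is the reverse inclusion, and within it the delicate point is the freeness bookkeeping: to invoke orthogonal reflexivity for $\mathcal{G}^\perp+\mathcal{H}^\perp$ I must first argue that a sum of two free sub-$\mathcal{A}$-modules is again free (or circumvent this by noting that $\mathcal{G}^\perp+\mathcal{H}^\perp=(\mathcal{G}\cap\mathcal{H})^\perp$ once one of the two inclusions is in hand, so that condition $(1)$ applied to the free module $\mathcal{G}\cap\mathcal{H}$ already delivers the freeness needed). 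I would therefore organize the write-up so that the easy inclusion is established first, then use it together with condition $(1)$ to see that $(\mathcal{G}\cap\mathcal{H})^\perp$ is free and contains $\mathcal{G}^\perp+\mathcal{H}^\perp$, and finally close the gap by the reflexivity computation above, being careful that every object to which I apply axiom $(2)$ has been certified free by axioms $(1)$ and $(3)$.
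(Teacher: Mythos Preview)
Your proposal is correct and follows exactly the paper's argument: apply Lemma~\ref{lem3}(c) to $\mathcal{G}^\perp,\mathcal{H}^\perp$ to get $(\mathcal{G}^\perp+\mathcal{H}^\perp)^\perp=\mathcal{G}^{\perp\perp}\cap\mathcal{H}^{\perp\perp}=\mathcal{G}\cap\mathcal{H}$ via reflexivity of $\mathcal{G}$ and $\mathcal{H}$, then take orthogonals and invoke reflexivity once more to conclude. The paper's proof is in fact terser than yours and does not pause over the freeness of $\mathcal{G}^\perp+\mathcal{H}^\perp$ that you (rightly) flag as needed for the second use of axiom~$(2)$; it simply writes $(\mathcal{G}^\perp+\mathcal{H}^\perp)^{\perp\perp}=\mathcal{G}^\perp+\mathcal{H}^\perp$ without comment.
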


\begin{proof}
By virtue of Lemma \ref{lem3}, we have \[\begin{array}{lll}
(\mathcal{G}^\perp+ \mathcal{H}^\perp)^\perp & = &
(\mathcal{G}^\perp)^\perp\cap (\mathcal{H}^\perp)^\perp \\ & = &
\mathcal{G}\cap \mathcal{H},\ \mbox{since $\mathcal{E}$ is
convenient,}\end{array}\]whence \[\mathcal{G}^\perp+
\mathcal{H}^\perp= (\mathcal{G}^\perp+
\mathcal{H}^\perp)^{\perp\perp}= (\mathcal{G}\cap
\mathcal{H})^\perp.\]
\end{proof}

\begin{lemma}
If $(\mathcal{E}, \phi)$ is a convenient $\mathcal{A}$-module and
$\mathcal{F}$ a \textsf{non-isotropic free sub-$\mathcal{A}$-module}
of $\mathcal{E}$, then $(\mathcal{F}, \widetilde{\phi})$, where
$\widetilde{\phi}:= \phi|_\mathcal{F},$ is a convenient
$\mathcal{A}$-module.
\end{lemma}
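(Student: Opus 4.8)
The plan is to verify directly that the self-pairing $(\mathcal{F},\widetilde{\phi})$ satisfies the three defining conditions of a convenient $\mathcal{A}$-module. First note that $\widetilde{\phi}=\phi|_{\mathcal{F}}$ is again an orthosymmetric $\mathcal{A}$-bilinear form, being a restriction of one, and that $\mathcal{F}$ is a free $\mathcal{A}$-module of finite rank (it is, by Lemma \ref{lem1}, a direct summand of the finite-rank free module $\mathcal{E}$); so $(\mathcal{F},\widetilde{\phi})$ is a self-pairing of the required type, and only conditions $(1)$, $(2)$, $(3)$ remain to be checked. For a sub-$\mathcal{A}$-module $\mathcal{G}$ of $\mathcal{F}$, write $\mathcal{G}^{\perp}$ for the orthogonal of $\mathcal{G}$ taken in $(\mathcal{E},\phi)$ and $\mathcal{G}^{\perp_{\mathcal{F}}}$ for the orthogonal taken in $(\mathcal{F},\widetilde{\phi})$. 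The basic observation, immediate from the section-wise description of these orthogonals in Scholium \ref{scho0} together with $\widetilde{\phi}_V(t|_V,\mathcal{G}(V))=\phi_V(t|_V,\mathcal{G}(V))$ for $t\in\mathcal{F}(U)$, is
\[
\mathcal{G}^{\perp_{\mathcal{F}}}=\mathcal{G}^{\perp}\cap\mathcal{F}.
\]
I shall also use that, $\mathcal{F}$ being non-isotropic, Lemma \ref{lem1} gives the orthogonal direct sum $\mathcal{E}=\mathcal{F}\perp\mathcal{F}^{\perp}$; in particular $\mathcal{F}\cap\mathcal{F}^{\perp}=0$.

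Condition $(3)$ is immediate: a free sub-$\mathcal{A}$-module of $\mathcal{F}$ is a free sub-$\mathcal{A}$-module of $\mathcal{E}$ (since $\mathcal{F}\subseteq\mathcal{E}$), so the intersection of two of them is free by condition $(3)$ for $(\mathcal{E},\phi)$. For condition $(1)$, let $\mathcal{G}$ be a free sub-$\mathcal{A}$-module of $\mathcal{F}$. Then $\mathcal{G}^{\perp}$ is free by condition $(1)$ for $\mathcal{E}$, while $\mathcal{F}$ is free by hypothesis, so $\mathcal{G}^{\perp_{\mathcal{F}}}=\mathcal{G}^{\perp}\cap\mathcal{F}$ is free by condition $(3)$ for $\mathcal{E}$. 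Moreover, since $\mathcal{G}\subseteq\mathcal{F}$,
\[
\mathcal{G}\cap\mathcal{G}^{\perp_{\mathcal{F}}}=\mathcal{G}\cap\mathcal{G}^{\perp}\cap\mathcal{F}=\mathcal{G}\cap\mathcal{G}^{\perp}=\mathrm{rad}\,\mathcal{G},
\]
so the radical of $\mathcal{G}$ computed in $(\mathcal{F},\widetilde{\phi})$ coincides with $\mathrm{rad}\,\mathcal{G}$ computed in $(\mathcal{E},\phi)$, which is a free sub-$\mathcal{A}$-module by condition $(1)$ for $\mathcal{E}$.

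It remains to prove condition $(2)$: every free sub-$\mathcal{A}$-module $\mathcal{G}$ of $\mathcal{F}$ is orthogonally reflexive in $(\mathcal{F},\widetilde{\phi})$, and this is the substantive step. Applying the basic observation twice,
\[
(\mathcal{G}^{\perp_{\mathcal{F}}})^{\perp_{\mathcal{F}}}=\bigl(\mathcal{G}^{\perp}\cap\mathcal{F}\bigr)^{\perp}\cap\mathcal{F}.
\]
Since $\mathcal{G}^{\perp}$ and $\mathcal{F}$ are free sub-$\mathcal{A}$-modules of the convenient module $(\mathcal{E},\phi)$, the lemma immediately preceding gives $\bigl(\mathcal{G}^{\perp}\cap\mathcal{F}\bigr)^{\perp}=(\mathcal{G}^{\perp})^{\perp}+\mathcal{F}^{\perp}=\mathcal{G}+\mathcal{F}^{\perp}$, the last equality by condition $(2)$ for $\mathcal{E}$. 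Hence $(\mathcal{G}^{\perp_{\mathcal{F}}})^{\perp_{\mathcal{F}}}=\bigl(\mathcal{G}+\mathcal{F}^{\perp}\bigr)\cap\mathcal{F}$, and it suffices to identify this with $\mathcal{G}$. Since $\mathcal{G}\subseteq\mathcal{F}$, the modular (Dedekind) law — checked at the level of stalks, where sub-$\mathcal{A}$-modules reduce to ordinary submodules and sums and intersections are taken stalk-wise — yields
\[
\bigl(\mathcal{G}+\mathcal{F}^{\perp}\bigr)\cap\mathcal{F}=\mathcal{G}+\bigl(\mathcal{F}^{\perp}\cap\mathcal{F}\bigr)=\mathcal{G}+0=\mathcal{G},
\]
using $\mathcal{F}\cap\mathcal{F}^{\perp}=0$. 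This establishes $(2)$, and with it that $(\mathcal{F},\widetilde{\phi})$ is convenient. I expect the only real difficulties to be bookkeeping ones: making sure the hypotheses of the preceding lemma on $(\mathcal{G}\cap\mathcal{H})^{\perp}$ are genuinely met (freeness of \emph{both} arguments) and that the nesting of orthogonals and intersections is tracked faithfully, and justifying the modular identity for sheaves of sub-$\mathcal{A}$-modules, which is handled by passing to stalks since a sub-$\mathcal{A}$-module is determined by its stalks.
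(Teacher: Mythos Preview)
Your proof is correct and follows essentially the same route as the paper's: the same identity $\mathcal{G}^{\perp_\mathcal{F}}=\mathcal{G}^{\perp}\cap\mathcal{F}$, the same reduction of $(1)$ and $(3)$ to the ambient convenient structure, and for $(2)$ the same chain $(\mathcal{G}^{\perp}\cap\mathcal{F})^{\perp}\cap\mathcal{F}=(\mathcal{G}+\mathcal{F}^{\perp})\cap\mathcal{F}=\mathcal{G}$ via the preceding lemma and the modular law. If anything, you are slightly more explicit than the paper in isolating the modular-law step and in noting that $\mathcal{F}\cap\mathcal{F}^{\perp}=0$ follows from non-isotropy.
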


\begin{proof}
Let $\bot(\widetilde{\phi})$ and $\bot(\phi)$ denote orthogonality
with respect to $\widetilde{\phi}$ and $\phi$ respectively. Let
$\mathcal{G}$ and $\mathcal{H}$ be sub-$\mathcal{A}$-modules of
$\mathcal{F}$.

$(1)$ That $\mathcal{G}^{\perp(\widetilde{\phi})}$ and
rad$_{\widetilde{\phi}}\mathcal{G}$ are free
sub-$\mathcal{A}$-modules is clear. Indeed,
\[\mathcal{G}^{\perp(\widetilde{\phi})}=
\mathcal{G}^{\perp(\phi)}\cap \mathcal{F}\] and
\[\mbox{rad}
_{\widetilde{\phi}}\mathcal{G}:= \mathcal{G}\cap
\mathcal{G}^{\perp(\widetilde{\phi})}= \mathcal{G}\cap
(\mathcal{G}^{\perp(\phi)}\cap \mathcal{F})= (\mathcal{G}\cap
\mathcal{G}^{\perp(\phi)})\cap \mathcal{F}=:
\mbox{rad}_\phi\mathcal{G}\cap \mathcal{F}.\]

$(2)$ By an easy calculation, we have \[\begin{array}{lll}
\mathcal{G}^{\perp(\widetilde{\phi})\perp(\widetilde{\phi})} & = &
(\mathcal{G}^{\perp(\widetilde{\phi})})^{\perp(\phi)}\cap
\mathcal{F}\\ & = & (\mathcal{G}^{\perp(\phi)}\cap
\mathcal{F})^{\perp(\phi)}\cap \mathcal{F}\\ & = &
(\mathcal{G}^{\perp(\phi)\perp(\phi)}+
\mathcal{F}^{\perp(\phi)})\cap \mathcal{F}\\ & = & (\mathcal{G}\cap
\mathcal{F})+ (\mathcal{F}^{\perp(\phi)}\cap \mathcal{F})\\ & = &
\mathcal{G}\cap \mathcal{F}\\ & = & \mathcal{G}\end{array}\]

$(3)$ Immediate.
\end{proof}

We now turn to the following theorem.

\begin{theorem}\label{theo4}
Let $(\mathcal{E}, \phi)$ be a \textsf{non-isotropic
skew-symmetric convenient $\mathcal{A}$-module,} and $\mathcal{F}$
a totally isotropic sub-$\mathcal{A}$-module of rank $k$. Then,
there is a \textsf{non-isotropic sub-$\mathcal{A}$-module}
$\mathcal{H}$ of $\mathcal{E}$ of the form \[\mathcal{H}=
\mathcal{H}_1\bot\cdots \bot \mathcal{H}_k,\]where if
$\mathcal{F}(U)= [r_{1, U}, \cdots, r_{k, U}]$ with $U$ an open
subset of $X$, then $r_{i, U}\in \mathcal{H}_i(U)$ for $1\leq
i\leq k$.
\end{theorem}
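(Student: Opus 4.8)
The plan is to mimic, in the sheaf-theoretic setting, the classical construction that embeds a totally isotropic subspace into a direct sum of hyperbolic planes. Working over the sections $\mathcal{E}(U)$ for an open $U\subseteq X$ and exploiting that $(\mathcal{E},\phi)$ is a skew-symmetric convenient $\mathcal{A}$-module (so that orthogonals and radicals of free submodules are again free, submodules are orthogonally reflexive, and intersections behave well), I would proceed by induction on the rank $k$ of $\mathcal{F}$. For $k=0$ there is nothing to prove. For the inductive step, pick the generator $r_{1,U}$ of $\mathcal{F}(U)$; since $\mathcal{E}$ is non-isotropic, $\mathcal{E}^\perp=0$, so $r_1$ is not in $\mathcal{E}^\perp$, and hence there is a section $\overline{s}_1\in\mathcal{E}(V)$ with $\phi_V(r_1|_V,\overline{s}_1)$ nowhere zero on small enough $V$; invoking Scholium \ref{scho2} (every nowhere-zero section of $\mathcal{A}$ is invertible, once $(X,\mathcal{A})$ is the ordered algebraized space assumed there), we may rescale to get $s_1$ with $\phi_U(r_1,s_1)=1$. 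Set $\mathcal{H}_1:=[r_1,s_1]$, a rank-two non-isotropic free sub-$\mathcal{A}$-module, exactly as in the proof of Theorem \ref{theo3}, Case $(1)$.

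Next I would use Lemma \ref{lem1}: since $\mathcal{H}_1$ is non-isotropic and free, $\mathcal{E}=\mathcal{H}_1\perp\mathcal{H}_1^\perp$, and by the previous lemma $(\mathcal{H}_1^\perp,\phi|_{\mathcal{H}_1^\perp})$ is again a non-isotropic skew-symmetric convenient $\mathcal{A}$-module. The key point is to push the remaining generators $r_{2,U},\ldots,r_{k,U}$ of $\mathcal{F}(U)$ into $\mathcal{H}_1^\perp$. For each $i\ge 2$, decompose $r_{i,U}=a_i r_1 + b_i s_1 + r_i'$ with $r_i'\in\mathcal{H}_1^\perp(U)$; using $\phi_U(r_i,r_1)=0$ and $\phi_U(r_i,s_1)=0$ (which hold because $\mathcal{F}$ is totally isotropic and $r_1\in\mathcal{F}(U)$, while $s_1$ pairs trivially with $r_i$ once we also arrange, replacing $s_1$ by $s_1$ minus a suitable $\mathcal{H}_1$-correction, that $\phi_U(r_i,s_1)=0$ — this is the analogue of the classical "straightening" step), one gets $a_i=b_i=0$ in the relevant coefficients, so $r_i\in\mathcal{H}_1^\perp(U)$. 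Then $\mathcal{F}':=$ the sub-$\mathcal{A}$-module generated by $r_{2},\ldots,r_{k}$ is a totally isotropic free sub-$\mathcal{A}$-module of $\mathcal{H}_1^\perp$ of rank $k-1$, to which the inductive hypothesis applies, yielding $\mathcal{H}_2\perp\cdots\perp\mathcal{H}_k\subseteq\mathcal{H}_1^\perp$ with $r_{i,U}\in\mathcal{H}_i(U)$; setting $\mathcal{H}:=\mathcal{H}_1\perp\mathcal{H}_2\perp\cdots\perp\mathcal{H}_k$ finishes the argument, non-isotropy of $\mathcal{H}$ following from non-isotropy of each $\mathcal{H}_i$ and the orthogonality of the summands via Lemma \ref{lem3}(c)--(d).

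The main obstacle I anticipate is the "straightening" of $s_1$ so that it is orthogonal to all the other generators $r_{i,U}$ simultaneously, i.e. choosing the partner $s_1$ of $r_1$ not just so that $\phi_U(r_1,s_1)=1$ but also so that $\phi_U(r_i,s_1)=0$ for $2\le i\le k$; classically one replaces $s_1$ by $s_1-\sum_{i\ge 2}\phi_U(r_i,s_1)\,r_i$ (using total isotropy of $\mathcal{F}$ to check this still satisfies $\phi_U(r_1,s_1)=1$), but here one must verify that the resulting section is a well-defined global-over-$U$ section compatible with restrictions, and that $[r_1,s_1]$ remains free and non-isotropic on every open $V\subseteq U$ — this is where the hypothesis that nowhere-zero sections of $\mathcal{A}$ are invertible, and the freeness clauses in the definition of a convenient $\mathcal{A}$-module, do the real work. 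A secondary technical point is ensuring that the submodule generated by sections over $U$ sheafifies correctly to a free sub-$\mathcal{A}$-module of $\mathcal{E}$, which is handled exactly as in Theorem \ref{theo3} by the remark that $\mathcal{E}^\perp(U)=\mathcal{E}(U)^\perp$ for free $\mathcal{A}$-modules together with condition $(1)$ in the definition of convenient $\mathcal{A}$-module.
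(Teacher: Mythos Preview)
Your inductive skeleton is the same as the paper's, but the ``straightening'' step you flag as the main obstacle is genuinely broken, and it is exactly the point where the paper's argument diverges from yours.

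You want to modify $s_1$ so that $\phi_U(r_i,s_1)=0$ for all $i\ge 2$, and you propose replacing $s_1$ by $s_1-\sum_{i\ge 2}\phi_U(r_i,s_1)\,r_i$ (or, in the earlier sentence, by ``$s_1$ minus a suitable $\mathcal{H}_1$-correction''). Neither works. For $j\ge 2$,
\[
\phi_U\Bigl(r_j,\ s_1-\sum_{i\ge 2}\phi_U(r_i,s_1)\,r_i\Bigr)
=\phi_U(r_j,s_1)-\sum_{i\ge 2}\phi_U(r_i,s_1)\,\phi_U(r_j,r_i)
=\phi_U(r_j,s_1),
\]
because $\phi_U(r_j,r_i)=0$ by total isotropy of $\mathcal{F}$; the correction changes nothing. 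An $\mathcal{H}_1$-correction $\alpha r_1+\beta s_1$ fails for the same reason: $\phi_U(r_j,r_1)=0$, so the only way to kill $\phi_U(r_j,s_1)$ is $\beta=1$, which destroys the pairing with $r_1$. In short, once $s_1$ is chosen, you cannot fix its pairings with $r_2,\dots,r_k$ using only vectors from $\mathcal{F}$ or from $\mathcal{H}_1$.

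The paper avoids this by choosing the partner correctly \emph{from the outset}, and this is where the convenient hypothesis actually enters. With $\mathcal{F}_{k-1}:=[r_1,\dots,r_{k-1}]\varsubsetneqq\mathcal{F}_k:=\mathcal{F}$, orthogonal reflexivity (condition~(2) of ``convenient'') forces the strict inclusion $\mathcal{F}_k^{\perp}\varsubsetneqq\mathcal{F}_{k-1}^{\perp}$, and condition~(1) guarantees both orthogonals are free, so one can pick $s_{k,U}\in\mathcal{F}_{k-1}^{\perp}(U)\setminus\mathcal{F}_k^{\perp}(U)$. This $s_k$ is orthogonal to $r_1,\dots,r_{k-1}$ by construction and pairs nontrivially with $r_k$; hence $\mathcal{H}_k:=[r_k,s_k]$ is a non-isotropic plane, $\mathcal{E}=\mathcal{H}_k\bot\mathcal{H}_k^{\perp}$ by Lemma~\ref{lem1}, and $\mathcal{F}_{k-1}\subseteq\mathcal{H}_k^{\perp}$ automatically, so the induction goes through with the \emph{original} generators $r_1,\dots,r_{k-1}$ intact. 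Your approach can be salvaged by making the analogous move at the first step (pick $s_1\in[r_2,\dots,r_k]^{\perp}\setminus\mathcal{F}^{\perp}$), but that is precisely the paper's idea, not the subtraction trick you wrote down.
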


\begin{proof}
Suppose that $k=1$, i.e. $\mathcal{F}\cong \mathcal{A}$. If
$\mathcal{F}(X)= [r_X]$ with $r_X\in \mathcal{E}(X)$ a
nowhere-zero section, then for every open $U\subseteq X$,
$\mathcal{F}(U)= [r_U]$, where $r_U= {r_X}|_U.$ Since $\phi_X$ is
non-degenerate, there exists a nowhere-zero section $s_X\in
\mathcal{E}(X)$ such that $\phi_U({r_X}|_U, {s_X}|_U)\neq 0$ for
every open $U\subseteq X$. The correspondence \[U\longmapsto
\mathcal{H}(U):= [r_U, s_U]\equiv [{r_X}|_U, {s_X}|_U],\]where $U$
runs over the open sets in $X$, along with the obvious restriction
maps, yields a complete presheaf of $\mathcal{A}$-modules on $X$.
Clearly, the pair $(\mathcal{H}, \widetilde{\phi}),$ where
$\widetilde{\phi}$ is the $\mathcal{A}$-bilinear morphism
$\widetilde{\phi}: \mathcal{H}\oplus \mathcal{H}\longrightarrow
\mathcal{A}$ such that \[(r_U, s_U)\longmapsto
\widetilde{\phi}_U(r_U, s_U):= \phi_U(r_U, s_U),\]is
non-isotropic. Hence, the theorem holds for the case $k=1$. Let us
now proceed by induction to $k>1$. To this end, put
$\mathcal{F}_{k-1}\cong \mathcal{A}^{k-1}$ and $\mathcal{F}_k:=
\mathcal{F}\cong \mathcal{A}^k.$ Then,
$\mathcal{F}_{k-1}\varsubsetneqq \mathcal{F}_k,$ so
$\mathcal{F}_k^\perp\varsubsetneqq \mathcal{F}^\perp_{k-1}.$ Since
orthogonal of free sub-$\mathcal{A}$-modules in a convenient
$\mathcal{A}$-module are free sub-$\mathcal{A}$-modules, the
inclusion $\mathcal{F}_k^\perp\varsubsetneqq
\mathcal{F}^\perp_{k-1}$ implies that, if
$\mathcal{F}^\perp_{k-1}\cong \mathcal{A}^m$ and
$\mathcal{F}_k^\perp\cong \mathcal{A}^n$ with $n<m$, then
$\mathcal{F}^\perp_{k-1}\setminus \mathcal{F}^\perp_k\cong
\mathcal{A}^{m-n}.$ For every open $U\subseteq X$, pick $s_{k,
U}\in \mathcal{F}^\perp_{k-1}(U)\setminus \mathcal{F}^\perp_k(U),$
and put $\mathcal{H}_k(U)= [r_{k, U}, s_{k, U}].$ The
correspondence \[U\longmapsto \mathcal{H}_k(U),\]where $U$ is open
in $X$, along with the obvious restriction maps, is a complete
presheaf of $\mathcal{A}(U)$-modules. Since $\phi_U(r_{i, U},
s_{k, U})=0$ for $1\leq i\leq k-1,$ $\phi_U(r_{k, U}, s_{k,
U})\neq 0$. Hence, $\mathcal{H}_k(U)$ is a non-isotropic
$\mathcal{A}(U)$-plane containing $r_{k, U}.$ By Lemma \ref{lem1}
$\mathcal{E}= \mathcal{H}_k\bot \mathcal{H}_k^\perp.$ Since $r_{k,
U}, s_{k, U}\in \mathcal{F}^\perp_{k-1}(U),$
$\mathcal{H}_k(U)\subseteq \mathcal{F}_{k-1}^\perp(U)$ for every
open $U\subseteq X$; so $\mathcal{H}_k\subseteq
\mathcal{F}^\perp_{k-1},$ which in turn implies that
$\mathcal{F}_{k-1}\subseteq \mathcal{H}^\perp_k.$ Apply an
inductive argument to $\mathcal{F}_{k-1}$ regarded as a
sub-$\mathcal{A}$-module of the non-isotropic skew-symmetric
convenient $\mathcal{A}$-module $\mathcal{H}^\perp_k$.
\end{proof}

We are now set for the analog of the Witt's theorem; to this end
we assume that $(X, \mathcal{A})$ \textit{is an algebraized space
satisfying the condition of Scholium \ref{scho2}.} For the
classical Witt's theorem, see Adkins-Weintraub~\cite[pp
368-387]{adkins}, Artin~\cite[pp 121, 122]{artin}, Berndt~\cite[p
21]{berndt}, Crumeyrolle~\cite[pp 11, 12]{crumeyrolle},
Deheuvels~\cite[pp 148, 152]{deheuvels}, Lang~\cite[pp 591,
592]{lang}, O'Meara~\cite[p 9]{omeara}.

\begin{theorem}~$(${\bf Witt's Theorem}$)$~ Let $\mathcal{E}\equiv (\mathcal{E}, \phi)$ and
$\mathcal{E}'\equiv (\mathcal{E}', \phi')$ be \textsf{isometric
non-isotropic skew-symmetric convenient $\mathcal{A}$-modules,}
$\mathcal{F}\equiv (\mathcal{F}, \widetilde{\phi})$, where
$\widetilde{\phi}:= \phi|_{\mathcal{F}}$, a \textsf{free
sub-$\mathcal{A}$-module} of $\mathcal{E}$, and $\sigma\equiv
(\sigma_U): \mathcal{F}\longrightarrow \mathcal{E}'$ an
\textsf{$\mathcal{A}$-isometry of $\mathcal{F}$ into
$\mathcal{E}'$.} Then, $\sigma$ \textsf{extends to an
$\mathcal{A}$-isometry of $\mathcal{E}$ onto $\mathcal{E}'$.}
\end{theorem}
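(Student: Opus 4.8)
The plan is to mimic the classical proof of Witt's theorem by induction on the rank of $\mathcal{F}$, using the structural results already established in the paper—most crucially the symplectic Gram-Schmidt theorem (Theorem \ref{theo3}) and its corollaries, together with Lemma \ref{lem1} on orthogonal direct sums and Theorem \ref{theo4} on absorbing a totally isotropic submodule into a non-isotropic one. The base case $\operatorname{rank}\mathcal{F}=0$ is trivial, and the case $\operatorname{rank}\mathcal{F}=1$, where $\mathcal{F}\cong\mathcal{A}$ is necessarily totally isotropic (skew-symmetry forces $\phi_U(r,r)=0$), is the real seed of the argument: given a nowhere-zero generating section $r$ of $\mathcal{F}$ and its image $\sigma(r)$ in $\mathcal{E}'$, one uses non-degeneracy of $\phi$ and $\phi'$ together with the invertibility-of-nowhere-zero-sections hypothesis from Scholium \ref{scho2} to pick hyperbolic complements: sections $s\in\mathcal{E}(U)$ and $s'\in\mathcal{E}'(U)$ with $\phi_U(r,s)=1=\phi'_U(\sigma(r),s')$. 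This extends $\sigma$ to an isometry of the hyperbolic plane $[r,s]$ onto $[\sigma(r),s']$, and by Lemma \ref{lem1} both $\mathcal{E}$ and $\mathcal{E}'$ split orthogonally as this plane $\bot$ its orthogonal complement; the complements are again isometric non-isotropic skew-symmetric convenient $\mathcal{A}$-modules (using Corollary \ref{cor2}, isometry depends only on the rank), so one is done by a dimension count.

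For the inductive step with $\operatorname{rank}\mathcal{F}=k>1$, I would split into two subcases according to whether $\mathcal{F}$ is non-isotropic or has a nontrivial radical. If $\mathcal{F}$ is non-isotropic, then by Lemma \ref{lem1} we have $\mathcal{E}=\mathcal{F}\bot\mathcal{F}^\perp$, and since $\sigma$ is an isometry, $\sigma(\mathcal{F})$ is a non-isotropic free sub-$\mathcal{A}$-module of $\mathcal{E}'$ of the same rank, so likewise $\mathcal{E}'=\sigma(\mathcal{F})\bot\sigma(\mathcal{F})^\perp$; choosing a proper nonzero free sub-$\mathcal{A}$-module of $\mathcal{F}$ and applying the inductive hypothesis inside a smaller-rank non-isotropic convenient piece, then re-splitting, reduces the rank. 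The orthogonal complements $\mathcal{F}^\perp$ and $\sigma(\mathcal{F})^\perp$ are isometric convenient $\mathcal{A}$-modules of equal rank (again via Corollary \ref{cor2}), so $\sigma$ extends over them by a rank argument, and gluing the two extensions over the orthogonal decomposition produces the desired global $\mathcal{A}$-isometry.

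If instead $\operatorname{rad}\mathcal{F}=\mathcal{F}\cap\mathcal{F}^\perp\neq 0$, write locally $\mathcal{F}(U)=[r_{1,U},\dots,r_{k,U}]$ and isolate a totally isotropic piece spanned by part of the radical. By Theorem \ref{theo4} there is a non-isotropic sub-$\mathcal{A}$-module $\mathcal{H}=\mathcal{H}_1\bot\cdots\bot\mathcal{H}_m$ of $\mathcal{E}$ containing the corresponding radical generators, and the analogous construction inside $\mathcal{E}'$ produces $\mathcal{H}'$ containing $\sigma$ of those generators. One first extends $\sigma$ from $\mathcal{F}$ to an $\mathcal{A}$-isometry of $\mathcal{F}+\mathcal{H}$ onto $\mathcal{F}'+\mathcal{H}'$—this uses the hyperbolic-plane construction of the base case applied in each $\mathcal{H}_i$, carefully choosing the partner sections so that the prescribed images $\sigma(r_{i,U})$ are respected—thereby enlarging $\mathcal{F}$ to a strictly larger free sub-$\mathcal{A}$-module on which $\sigma$ is already defined, and then applies the non-isotropic subcase. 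The main obstacle I anticipate is the bookkeeping in this last subcase: making the hyperbolic-complement choices locally over every open $U$ so that they are compatible with the presheaf restriction maps (hence assemble into a genuine $\mathcal{A}$-morphism on sheaves, not merely a collection of section-level maps), and simultaneously ensuring that the enlargement $\mathcal{F}+\mathcal{H}$ is again a free sub-$\mathcal{A}$-module to which the convenient-module hypotheses and Lemma \ref{lem1} legitimately apply; the invertibility hypothesis of Scholium \ref{scho2} is exactly what keeps these local choices coherent, and the sheafification remarks recalled in the introduction (Mallios~\cite[(1.54)]{mallios}) are what guarantee the glued object is an $\mathcal{A}$-module.
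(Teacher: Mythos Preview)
Your strategy and the key ingredients match the paper's: embed $\mathrm{rad}\,\mathcal{F}$ into a non-isotropic $\mathcal{H}$ of hyperbolic planes via Theorem~\ref{theo4}, split with Lemma~\ref{lem1}, and finish on the leftover complements by the rank-count isometry of Corollary~\ref{cor2}. Two remarks, though.

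First, the induction framing is superfluous: you never actually invoke an inductive hypothesis. Your case~(a) is handled directly by Corollary~\ref{cor2} on $\mathcal{F}^\perp$ and $\sigma(\mathcal{F})^\perp$, and case~(b) reduces to case~(a) after the enlargement, so the rank of $\mathcal{F}$ never decreases. The paper accordingly runs a single direct argument covering both cases at once, starting from the radical splitting $\mathcal{F}=\mathcal{G}\,\bot\,\mathrm{rad}\,\mathcal{F}$.

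Second, and this is the one substantive point, the paper applies Theorem~\ref{theo4} to $\mathrm{rad}\,\mathcal{F}$ \emph{inside $\mathcal{G}^\perp$}, not inside all of $\mathcal{E}$ as you propose. That placement is exactly what makes your ``careful choosing'' of partner sections trivial rather than delicate: with each hyperbolic partner $s_{i,U}\in\mathcal{G}^\perp(U)$ one has $\phi_U(g,s_{i,U})=0$ for every $g\in\mathcal{G}(U)$, and likewise $s'_{i,U}\in\mathcal{G}'^\perp(U)$ on the $\mathcal{E}'$ side, so after normalizing so that $\phi'_U(r'_{i,U},s'_{i,U})=\phi_U(r_{i,U},s_{i,U})$ the assignment $r_{i,U}\mapsto r'_{i,U}$, $s_{i,U}\mapsto s'_{i,U}$ is automatically an isometry $\beta:\mathcal{H}\to\mathcal{H}'$ that agrees with $\sigma$ on $\mathrm{rad}\,\mathcal{F}$ and is orthogonal to $\sigma|_{\mathcal{G}}$. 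If instead you build $\mathcal{H}$ in $\mathcal{E}$, the values $\phi_U(g,s_{i,U})$ for $g\in\mathcal{G}(U)$ are uncontrolled, and arranging $\phi'_U(\sigma_U(g),s'_{i,U})=\phi_U(g,s_{i,U})$ for all such $g$ while simultaneously keeping the $\mathcal{H}'_i$ mutually orthogonal is the genuine obstacle you anticipated; the radical splitting is precisely the device that dissolves it.
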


\begin{proof}
 Since $\mathcal{E}$ is convenient and $\mathcal{F}$ is a free
 sub-$\mathcal{A}$-module of $\mathcal{E}$, there exists a free
 sub-$\mathcal{A}$-module of $\mathcal{E}$ such that
 $\mathcal{F}= \mathcal{G}\bot~
\mbox{rad}~\mathcal{F},$ where if $\mathcal{F}$ and rad
$\mathcal{F}$ are $\mathcal{A}$-isomorphic to $\mathcal{A}^k$ and
$\mathcal{A}^l$ respectively, then $\mathcal{G}$ is
$\mathcal{A}$-isomorphic to $\mathcal{A}^{k-l}$. By Lemma
\ref{lem3}(1), $\mathcal{F}^\perp\subseteq \mathcal{G}^\perp;$ since
$\mathcal{G}^\perp$ is non-isotropic and skew-symmetric, and rad
$\mathcal{F}$ is a totally isotropic free sub-$\mathcal{A}$-module,
by applying Theorem \ref{theo4}, we see that there is a free
sub-$\mathcal{A}$-module $\mathcal{H}$ of $\mathcal{G}^\perp$ of the
form \[\mathcal{H}:= \mathcal{H}_1\bot\cdots \bot \mathcal{H}_l\]in
which each $\mathcal{H}_i$ is a non-isotropic free
sub-$\mathcal{A}$-module of rank $2$ and such that if
\[(\mbox{rad}~\mathcal{F})(U)= [r_{1, U}, \cdots, r_{l, U}],\]where
$U$ is an open subset of $X$, then $r_{i, U}\in \mathcal{H}_i(U)$
with $i= 1, \ldots, l$. Since $\mathcal{H}$ is non-isotropic it
splits $\mathcal{G}^\perp$: $\mathcal{G}^\perp= \mathcal{H}\bot~
\mathcal{J};$ in fact, $\mathcal{J}\cong \mathcal{H}^\perp$ (see
Lemma \ref{lem1}). Hence, \[\mathcal{E}= \mathcal{G}^\perp\bot~
\mathcal{G}= \mathcal{H}\bot~\mathcal{J}\bot~\mathcal{G},\]within
$\mathcal{A}$-isomorphisms respectively. Put $\mathcal{F}':=
\sigma(\mathcal{F}),$ $\mathcal{G}':= \sigma(\mathcal{G})$ and
$r'_{i, U}:= \sigma_U(r_{i, U}),$ $1\leq i\leq l,$ for every open
$U\subseteq X$. Now, let us fix $U$ in the topology of $X$.
Clearly,
\[\mathcal{F}'(U)= \{t\in \mathcal{E}'(U):~\phi'_U(\sigma_U(s),
t)=0,~ s\in \mathcal{F}(U)\}\]and \[\mathcal{F}(U)^\perp=\{z\in
\mathcal{E}(U):~\phi_U(s, z)=0, ~s\in \mathcal{F}(U)\}.\]For every
$z\in \mathcal{F}(U)^\perp,$ we have for all $s\in \mathcal{F}(U)$
\[\phi'_U(\sigma_U(s), \sigma_U(z))= \phi_U(s, z)=0;\]we thus deduce
that \[\sigma_U(\mathcal{F}^\perp(U))=
\sigma_U(\mathcal{F}(U)^\perp)\subseteq \mathcal{F}'(U)^\perp=
{\mathcal{F}'}^\perp(U).\]hence, \[\begin{array}{lll}
\sigma_U(\mbox{rad}~\mathcal{F}(U)) & := &
\sigma_U(\mathcal{F}(U)\cap \mathcal{F}(U)^\perp) \\ & = &
\sigma_U(\mathcal{F}(U))\cap \sigma_U(\mathcal{F}(U)^\perp),
~\mbox{since $\sigma_U$ is an $\mathcal{A}(U)$-isomorphism}\\ &
\subseteq & \mathcal{F}'(U)\cap \mathcal{F}'(U)^\perp=
\mbox{rad}~\mathcal{F}'(U):=
\mbox{rad}~\sigma_U(\mathcal{F}(U)).\end{array}\]Conversely, let
$t\mbox{rad}~\sigma_U(\mathcal{F}(U)):= \sigma_U(\mathcal{F}(U))\cap
\sigma_U(\mathcal{F}(U))^\perp.$ As $\sigma_U$ is an
$\mathcal{A}(U)$-isomorphism there exists a unique $s\in
\mathcal{F}(U)$ such that $t=\sigma_U(s).$ But
\[0=\phi'_U(\sigma_U(r), \sigma_U(s))= \phi_U(r, s)\]for every $r\in
\mathcal{F}(U).$ Consequently, $s\in \mathcal{F}(U)^\perp.$ Thus,
\[s\in \mathcal{F}(U)\cap \mathcal{F}(U)^\perp=:
\mbox{rad}~\mathcal{F}(U);\]hence
\[t\in \sigma_U(\mbox{rad}~\mathcal{F}(U)),\]from which we deduce that
\[\mbox{rad}~\sigma_U(\mathcal{F}(U))\subseteq
\sigma_U(\mbox{rad}~\mathcal{F}(U)).\]The end result of this
argument is that \[\mbox{rad}~\sigma_U(\mathcal{F}(U))=
\sigma_U(\mbox{rad}~\mathcal{F}(U)).\]Since $U$ is arbitrary, it
follows that \[\mbox{rad}~\mathcal{F}'\equiv
\mbox{rad}~\sigma(\mathcal{F})= \sigma(\mbox{rad}~\mathcal{F})\cong
\mathcal{A}^l.\] Since $\sigma$ is an $\mathcal{A}$-isomtery, we
obtain that \[\mathcal{F}':= \sigma(\mathcal{F})=
\sigma(\mathcal{G}\bot~\mbox{rad}~\mathcal{F})=
\mathcal{G}'\bot~\mbox{rad}~\mathcal{F}'\]is a radical splitting of
$\mathcal{F}'$. Repeating the early argument, we have
\[\mathcal{E}'= \mathcal{H}'\bot ~\mathcal{J}'\bot ~\mathcal{G}'\]in
which \[\mathcal{H}'= \mathcal{H}_1'\bot\cdots \bot \mathcal{H}'_l\]
with each $\mathcal{H}_i'$ a non-isotropic free
sub-$\mathcal{A}$-module of rank $2$ such that if
\[(\mbox{rad}~\mathcal{F}')(U)= [r'_{1, U}, \cdots, r'_{l,
U}],\]where $U$ is open in $X$, then $r'_{i, U}\in
\mathcal{H}'_i(U)$ for every $1\leq i\leq l.$ Suppose for every
$i=1, \ldots, l$, $\mathcal{H}_i(U)= [r_{i, U}, s_{i, U}]$ and
$\mathcal{H}_i'(U)= [r'_{i, U}, s'_{i, U}].$ Let $\alpha=
(\alpha_U): \mathcal{H}\longrightarrow \mathcal{H}'$ be an
$\mathcal{A}$-morphism, given by the prescription
\[\begin{array}{lll} \alpha_U(r_{i, U})= r'_{i, U} & \mbox{and} &
\alpha_U(s_{i, U})= s'_{i, U}\end{array}\]for every open $U\subseteq
X$ and $i=1, \ldots, l.$ That $\alpha$ is an
$\mathcal{A}$-isomorphism is clear. Next, observe that for every
open $U\subseteq X$ and $i=1, \ldots, l$, since $\phi_U$ and
$\phi'_U$ are non-degenerate, $\phi_U(r_{i, U}, s_{i, U})$ and
$\phi'_U(r'_{i, U}, s'_{i, U})$ are nowhere zero sections;
consequently based on the hypothesis regarding the coefficient
algebra sheaf $\mathcal{A}$, $\phi_U(r_{i, U}, s_{i, U})$ and
$\phi'_U(r'_{i, U}, s'_{i, U})$ are invertible. It is clear that for
every open $U\subseteq X$ and $i=1, \ldots, l$, \[\mathcal{H}_i'(U)=
[r'_{i, U}, s'_{i, U}\phi_U(r_{i, U}, s_{i, U})(\phi'_U(r'_{i, U},
s'_{i, U}))^{-1}].\]The $\mathcal{A}$-morphism $\beta\equiv
(\beta_U): \mathcal{H}\longrightarrow \mathcal{H}'$ given by
\[\begin{array}{lll} \beta_U(r_{i, U})= r'_{i, U} & \mbox{and} &
\beta_U(s_{i, U})= s'_{i, U}\phi_U(r_{i, U}, s_{i,
U})(\phi'_U(r'_{i, U}, s'_{i, U}))^{-1}\end{array}\] is clearly an
$\mathcal{A}$-isomorphism such that \[\phi'_U(\beta_U(r_{i, U}),
\beta_U(s_{i, U}))= \phi_U(r_{i, U}, s_{i, U});\]in other words,
$\beta$ is an $\mathcal{A}$-isometry of $\mathcal{H}$ onto
$\mathcal{H}'$. Furthermore, $\beta$ agrees with $\sigma$ on each
$r_{i, U}$, and hence on rad $\mathcal{F}$. Also, the given $\sigma$
carries $\mathcal{G}$ onto $\mathcal{G}'$ isomorphically. Hence
$\sigma$ extends to an $\mathcal{A}$-isometry of $\mathcal{H}\bot
~\mathcal{G}$ onto $\mathcal{H}'\bot~\mathcal{G}'$. Now, rank
$(\mathcal{E})= \mbox{rank}~(\mathcal{E}')$; hence rank
$(\mathcal{J})= \mbox{rank}~(\mathcal{J}');$ hence by Corollary
\ref{cor2} there is an $\mathcal{A}$-isometry of $\mathcal{J}$ onto
$\mathcal{J}'$. Hence, finally, $\sigma$ extends to an isometry of
$\mathcal{E}= (\mathcal{H}\bot~\mathcal{G})\bot ~\mathcal{J}$ onto
$\mathcal{E}'= (\mathcal{H}'\bot~\mathcal{G}')\bot~\mathcal{J}'$.
\end{proof}

\addcontentsline{toc}{section}{REFERENCES}

\noindent Patrice P. Ntumba\\{Department of Mathematics and
Applied Mathematics}\\{University of Pretoria}\\ {Hatfield 0002,
Republic of South Africa}\\{Email: patrice.ntumba@up.ac.za}

\noindent Adaeze Orioha\\{Department of Mathematics and Applied
Mathematics}\\{University of Pretoria}\\ {Hatfield 0002, Republic
of South Africa}\\{Email: s27632832@tuks.co.za}

\end{document}